\newtheorem{theo}{Theorem}
\newtheorem{coro}{Corollary}[section]
\newtheorem{lemm}[coro]{Lemma}
\newtheorem{prop}[coro]{Proposition}
\newtheorem{rema}[coro]{Remark}
\newtheorem{defi}[coro]{Definition}
\newtheorem{ques}{Question}
\newcommand{\uind}{\mathrm{u}\mbox{-}\mathrm{ind}}
\title{Super exponential divergence of periodic 
points for $C^1$-generic partially hyperbolic 
homoclinic classes}
\author{Xiaolong Li and Katsutoshi Shinohara}
\begin{document}

\maketitle

\begin{abstract}
A diffeomorphism $f$ is called super exponential divergent if for every $r>1$, the lower limit of $\#\mbox{Per}_n(f)/r^n$ diverges to infinity as $n$ tends to infinity, where $\mbox{Per}_n(f)$ is the set of all periodic points of $f$ with period $n$. This property is stronger than the usual super exponential growth of the number of periodic points. We show that for a three dimensional manifold $M$, there exists an open subset $\mathcal{O}$ of $\mbox{Diff}^1(M)$ such that diffeomorphisms with super exponential divergent property form a dense subset of $\mathcal{O}$ in the $C^1$-topology. A relevant result of non super exponential divergence for diffeomorphisms in a locally generic subset of $\mbox{Diff}^r(M)\ (1\le r\le \infty)$ is also shown. 
\end{abstract}
\footnote{XL: lixl@hust.edu.cn\\
Graduate School of Mathematics and Statistics,\\
Huazhong University of Science and Technology, Luoyu Road 1037, Wuhan, China\\
}
\footnote{KS: ka.shinohara@r.hit-u.ac.jp\\
Graduate School of Business Administration,\\
 Hitotsubashi University, 2-1 Naka, Kunitachi, Tokyo, Japan
 }

{\footnotesize{2020 \emph{Mathematics Subject Classification}:  37C20, 37C25, 37C29, 37D30}

\emph{Key words and phrases}: 
non-uniform hyperbolicity,
partial hyperbolicity, heterodimensional cycle
super exponential growth, 
number of periodic points.}


\section{Introduction}

\subsection{Backgrounds}

The investigation of the growth of the number of 
periodic points for dynamical systems is a fundamental 
problem. For uniformly hyperbolic systems, we know that 
the growth of the number of periodic points cannot be faster 
than some exponential functions. Then a natural question is that 
what happens for systems which fail to be uniformly hyperbolic 
in a robust fashion. 

A fundamental result is given by Artin and Mazur, which 
asserts that for a dense subset of $C^r$ maps of a compact manifold into itself with the uniform $C^r$ topology, the number of 
isolated periodic points grows at most exponentially \cite{AM}. 
Meanwhile, there are some results for locally generic maps. For instance, Bonatti, D\'iaz and Fisher shows that generically in $\mbox{Diff}^1(M)$, if a homoclinic class contains periodic points of different indices, then it exhibits super exponential growth of number of periodic points \cite{BDF}; For certain semi-group actions on the interval, Asaoka, Shinohara and Turaev construct $C^r$ $(r\ge 1)$ open set in which $C^r$ generic maps exhibit super exponential growth of number of periodic points \cite{AST}; For $C^r$ diffeomorphisms of compact smooth manifolds, they also construct local $C^r$ generic subset with fast growth of number of periodic points under certain conditions about the signatures of non-linearities and Schwarzian derivatives of the transition maps \cite{AST2}; Berger shows that for $2\le r\le \infty$ and manifolds of dimension greater than 1, there exists open set $\mathcal{O}\subset\mbox{Diff}^r(M)$ in which $C^r$ generic $f$ displays a fast growth of the number of periodic points \cite{Be}. Thus one may consider the difference of the growth as 
a probe of the degree of the non-hyperbolicity which the system 
exhibits.

Let us be more precise. Given a set $X$ and 
a map $f:X \to X$, we say that $x \in X$ is a periodic 
point of period $n$ (where $n \geq 1$) if $f^n(x) = x$ and
$n$ is the least positive integer for which this equality holds. In particular, $x$ is called a fixed point of $f(x)=x$. 
We denote the set of periodic points of period $n$ of $f$
by $\mathrm{Per}_n(f)$.

For the investigation of the number of periodic points, we mainly focus on the {\it ratio} of $\#\mathrm{Per}_n(f)$ to $r^n\ (r>1)$.
We customarily consider upper limit ($\limsup$) 
of the ratio when $n$ goes to infinity. $f$ is called 
\emph{super-exponential} if for every $r >1$
the sequence $r^{-n}\#\mathrm{Per}_n(f)$ has 
upper limit equals to $+\infty$. One motivation of this definition is that the ``rate of exponential growth" of the number of periodic points comes from the 
investigation of the convergence radius of dynamical zeta function.
The positivity of the convergence 
radius is equivalent to the finitude 
of the upper limit of the ratio. In this case, at least a subsequence of $\#\mbox{Per}_n(f)$ grows exponentially fast, which implies that the dynamics exhibits relatively complicated behaviour. On the other hand, the cases of super exponentially fast growth also appear very often, as aforementioned papers indicated.

Meanwhile, as a measure of non-uniform hyperbolicity 
it is interesting to ask what happens for the lower limit ($\liminf$) of the 
ratio. Indeed, the lower limit provides us more information about the number of $n$-periodic points for {\it every} sufficiently large $n$. It is not easy to construct 
an diffeomorphism around which, maps whose lower limit of the ratio divergent to $+\infty$ exist persistently (for instance, in a dense or residual subset of a neighbourhood of the initial diffeomorphism). In this paper, we provide such an example.

We say that $f$ is \emph{super-exponentially divergent}
if for every $r >1$
the sequence $r^{-n}\#\mathrm{Per}_n(f)$ has 
lower limit equals to $+\infty$. Indeed, this is equivalent 
to say that the limit exists and it is equal to $+\infty$. Let $\mbox{Diff}^1(M)$ denote the space of $C^1$ diffeomorphisms of a manifold $M$, endowed with the $C^1$ topology. 

\begin{theo}\label{thm1}
There exists a three dimensional closed  
manifold $M$ such that the following holds:
There exist an non-empty 
open set $\mathcal{O} \subset \mathrm{Diff}^1(M)$
 and a dense subset $\mathcal{D}$ of $\mathcal{O}$ such that every diffeomorphism 
in $\mathcal{D}$ is super-exponentially divergent. 
\end{theo}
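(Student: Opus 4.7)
The plan is to construct $\mathcal{O}$ as an open set of diffeomorphisms supporting a robust heterodimensional configuration inside a partially hyperbolic homoclinic class, and to obtain $\mathcal{D}$ dense in $\mathcal{O}$ via two ingredients: a perturbation lemma producing hyperbolic horseshoes of arbitrarily large topological entropy inside the class, and a diagonal $C^1$-limit argument that stacks such horseshoes. Concretely, I would take $M$ to be a closed $3$-manifold and let $\mathcal{O}\subset \mathrm{Diff}^1(M)$ consist of diffeomorphisms whose distinguished saddle $p$ has homoclinic class $H(p,f)$ that is partially hyperbolic with dominated splitting $E^s\oplus E^c\oplus E^u$ of central dimension one, contains a saddle $q$ of the opposite index to $p$, and carries a robust heterodimensional cycle between $p$ and $q$, following Bonatti--D\'iaz type constructions that produce such open sets in dimension three.

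The key technical lemma I would aim for is the following: for every $f\in\mathcal{O}$, every $\epsilon>0$, and every integer $k\geq 2$, there exists $g\in \mathcal{O}\cap B_{C^1}(f,\epsilon)$ admitting a hyperbolic basic set $\Lambda_k\subset H(p,g)$ topologically conjugate to the full $k$-shift. The mechanism is to iterate the standard blender-horseshoe construction: unfolding the robust heterodimensional cycle once produces a blender-horseshoe with a controlled number of symbolic branches, and applying the same unfolding to the new saddles sitting inside the blender $O(\log k)$ times multiplies the branch count past $k$. Structural stability of hyperbolic basic sets then yields a $C^1$-open neighborhood $\mathcal{V}_k$ of $g$ on which $\Lambda_k$ persists, hence some $N_k$ with $\#\mathrm{Per}_n(g')\geq \tfrac12 k^n$ for all $g'\in\mathcal{V}_k$ and all $n\geq N_k$.

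To build the dense set, fix $f_0\in\mathcal{O}$ and $\epsilon>0$ and inductively pick $f_k\in\mathcal{O}\cap\bigcap_{j\leq k}\mathcal{V}_j$ with $d_{C^1}(f_{k-1},f_k)<\epsilon\cdot 2^{-k-1}$, which is feasible because at stage $k$ one only needs to land inside the finitely many previously-fixed open sets $\mathcal{V}_1,\dots,\mathcal{V}_{k-1}$ while also satisfying the conclusion of the lemma with parameter $k$. The $C^1$-limit $g=\lim_k f_k$ lies in $B(f_0,\epsilon)\cap\bigcap_k\mathcal{V}_k$, so $\#\mathrm{Per}_n(g)\geq \tfrac12 k^n$ for every $k$ and every $n\geq N_k$. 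For an arbitrary $r>1$, choosing any $k\geq 2r$ then gives $\liminf_{n\to\infty} r^{-n}\#\mathrm{Per}_n(g)\geq \liminf_{n\to\infty}\tfrac12 (k/r)^n=+\infty$, so $g$ is super-exponentially divergent.

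The main obstacle is the perturbation lemma: iterating blender constructions to unbounded symbolic complexity by $C^1$-small perturbations while preserving the ambient partial hyperbolicity and the robust heterodimensional cycle needed for the next iteration is delicate. Because $\dim E^c=1$, the geometric room for packing many periodic orbits of a common period is constrained by the central expansion and contraction rates, so each blender-inside-blender must be oriented and scaled compatibly with the ambient central dynamics; keeping the hyperbolicity constants of $\Lambda_k$ (and therefore the diameters of the persistence neighborhoods $\mathcal{V}_k$) from degenerating as $k\to\infty$ is the principal analytic point, and it is what ultimately determines that the construction yields $\liminf$ rather than merely $\limsup$ super-exponential growth.
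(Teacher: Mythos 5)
Your key perturbation lemma is false, and the failure is structural rather than technical. If $g$ admits a hyperbolic basic set $\Lambda_k$ such that $(\Lambda_k, g|_{\Lambda_k})$ is topologically conjugate to the full shift on $k$ symbols, then $h_{\mathrm{top}}(g)\geq \log k$. But for a Lipschitz map of a compact $d$-dimensional manifold one has the classical bound $h_{\mathrm{top}}(g)\leq d\,\max(0,\log \mathrm{Lip}(g))$, and $\mathrm{Lip}(g)$ is uniformly bounded on the ball $B_{C^1}(f,\epsilon)$. Hence for all $k$ larger than some $k_0(f,\epsilon)$ no such $g$ exists, and your diagonal argument has nothing to intersect. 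What iterating blender-horseshoe constructions actually produces is a full $k$-shift for some iterate $g^{N_k}$ with $N_k\gtrsim \log k$; this yields about $k^{n/N_k}=e^{n\log k/N_k}$ periodic points, an exponential rate $\log k/N_k$ that stays bounded, and moreover only realizes periods that are multiples of $N_k$. So this route can never produce super-exponential growth, and a fortiori cannot control $\liminf_{n} r^{-n}\#\mathrm{Per}_n$ over \emph{all} large $n$, which is the entire content of divergence as opposed to super-exponential $\limsup$. Any mechanism resting on persistent uniformly hyperbolic sets is capped by the entropy bound; the phenomenon here is intrinsically non-hyperbolic.

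The paper's mechanism has two ingredients your proposal is missing. First, after perturbing the heterodimensional cycle to a linearized ``SH-simple cycle'' whose transition maps are isometries in the center direction and whose central eigenvalues $\mu$ at $P_1$ and $\lambda$ at $P_2$ have rationally independent logarithms, one solves explicitly for periodic points winding once around the cycle with $m_1$ iterates near $P_1$ and $m_2$ near $P_2$; realizability reduces to a one-dimensional inequality in the center coordinate, and the rational independence allows choosing pairs $(m_{1,l},m_{2,l})$ whose sum increases by exactly one at each step while $m_{1,l}\log\mu+m_{2,l}\log\lambda$ stays in a fixed window above $\log\tilde\alpha$. This produces a periodic point of \emph{every} sufficiently large period $n$ with central Lyapunov exponent tending to $0$ --- this period-by-period bookkeeping is what makes the liminf accessible, and it is why the paper insists that $P_1,P_2$ be fixed points. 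Second, since the center bundle is one-dimensional and these orbits are nearly neutral in the center, an arbitrarily small $C^1$ perturbation supported near each such orbit replaces it by as many periodic orbits of the same period as desired (the $C^1$ size needed to create $N$ fixed points of a one-dimensional map near the identity does not grow with $N$); summing such perturbations over $n$ gives the super-exponentially divergent diffeomorphism, and density of $\mathcal{D}$ follows because the whole construction starts from an arbitrary element of the open set. To repair your argument you would need to replace the ``full $k$-shift'' lemma by a statement producing, for every large $n$, super-exponentially many periodic points of period exactly $n$ via explosion of weak periodic orbits, not via stacking horseshoes.
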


Let us make some comments. Our construction is based on 
the bifurcation of heterodimensional cycles. Since heterodimensional 
cycles exist only for manifolds whose dimension is greater 
than two, we are not sure a similar result holds for surface diffeomorphisms. 
We gave this result for $C^1$-regularity. As we will see, 
our technique heavily depends on the nature of $C^1$-distance.
Thus the $C^r$-case for $r >1$ is open. 

In the following, we give the description of the open set 
$\mathcal{O}$.

\subsection{Results}

Let $M$ be a closed $n$-dimentional Riemannian manifold.
We fix a Riemaniann metric $\| \cdot \|$ on $TM$ and 
a metric $d$ on $M$. Denote by $\mathrm{Diff}^1(M)$ the space of $C^1$ diffeomorphisms of $M$ endowed with the 
$C^1$ topology. 
We also fix a metric $\mathrm{dist}(f,g)$ for every 
pair of $f,g\in \mathrm{Diff}^1(M)$ 
which is compatible with the $C^1$ topology.

Let us recall some notion for non-uniformly 
hyperbolic systems, following \cite{BDU}. 
For further information, 
see for instance \cite{BDV}.
Let $f\in \mathrm{Diff}^1(M)$ and 
$\Lambda \subset M$ be an $f$-invariant set, 
that is, $f(\Lambda) = \Lambda$ holds. 
Let $E, F$ be subbundles of $TM|_{\Lambda}$ which are
invariant under $Df$ respectively.
$E_x \cap F_x = \{ 0 \}$ for every $x \in \Lambda$.  
We say that $E \oplus F$ is a \emph{dominated splitting}
if there exists a positive real number $\alpha$
strictly smaller than 1 such that 
for every $x \in \Lambda$ we have 
$\|Df|_{E_x}\| \cdot \|Df^{-1}|_{F_{f(x)}}\| < \alpha$,
where $\|Df|_{E_x}\|$ denotes the operator norm of 
$Df|_{E_x}$ with respect to the Riemannian metric.

We say that $\Lambda$
is \emph{strongly partially hyperbolic} 
if there is a splitting 
$TM|_{\Lambda} = E^s \oplus E^c \oplus E^u $ such that
$E^s \oplus (E^c \oplus E^u)$ and
$(E^s \oplus E^c) \oplus E^u$ are dominated splittings
with $\dim E^c =1, \dim E^s \geq 1$ and $\dim E^u \geq 1$, $E^s$ is uniformly contracting and $E^u$ is uniformly expanding.
We say that $Df$ is orientation preserving if 
if $E^s, E^c, E^u$ are all orientable and $Df$ preserves 
these orientations.

Suppose we have a pair of hyperbolic periodic 
points $P_1, P_2 \in \Lambda$.  
We say that they are \emph{adapted} if 
$\uind(P_1) = \dim(E^u) +1$ and  
$\uind(P_2) = \dim(E^u)$, where
$\uind(P)$ denotes the dimension of the unstable subspace of a hyperbolic periodic point $P$. 

We say that $f$ is \emph{transitive} (on $M$) 
if there is an orbit which is dense in $M$, that is, 
if there exists $x \in M$ such that 
$\{f^{n}(x)\}_{n \in \mathbb{Z}}$ is dense in $M$.
A diffeomorphism $f$ is $C^r$-\emph{robustly transitive}
if there is an open neighbourhood $\mathcal{U}$ of 
$f$ in $\mathrm{Diff}^r(M)$ equipped with the 
$C^r$ topology such that every 
$g \in \mathcal{U}$ is transitive.  

The following is our first result.
\begin{theo}\label{theo:trans}
Let $M$ be a three dimensional closed manifold and 
$f$ be a $C^1$-robustly 
transitive diffeomoprhism for which the entire 
manifold $M$ is a strongly partially hyperbolic 
set. 
Suppose that $f$ has two hyperbolic 
fixed points $P_1$ and $P_2$ having 
$u$-indices $2$ and $1$ respectively and  
$Df$ preserves the orientations of the strongly 
partially hyperbolic splitting over $M$. 
Then there exist a $C^1$-neighbourhood $\mathcal{U}$
 of $f$ in $\mathrm{Diff}^1(M)$ and a dense subset 
 $\mathcal{D}$ of $\mathcal{U}$ satisfying the following:
 Every $g \in \mathcal{D}$ is super-exponentially 
 divergent.
\end{theo}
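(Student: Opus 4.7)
The plan is to combine a known density mechanism for heterodimensional cycles with a quantitative blender construction that produces hyperbolic periodic orbits of every sufficiently large period, and then run a diagonal argument to assemble a single diffeomorphism realising super-exponential divergence. Note that $\liminf_{n\to\infty} r^{-n}\#\mathrm{Per}_n(g)=+\infty$ is not a $G_\delta$ condition on $g$, so I do not expect $\mathcal{D}$ to be residual, only dense; accordingly the construction will be carried out inside an arbitrary nonempty open $\mathcal{V}\subset\mathcal{U}$ by iterated perturbation.

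For the density step, I would use that robust transitivity forces $M$ itself to coincide with the chain-recurrence class of both $P_1(g)$ and $P_2(g)$ for every $g$ in a small $C^1$-neighbourhood $\mathcal{U}$ of $f$. Since the $u$-indices differ by one, the Hayashi--Bonatti--Crovisier connecting-lemma machinery should give a $C^1$-dense subset of $\mathcal{U}$ on which there is a heterodimensional cycle between the continuations $P_1(g)$ and $P_2(g)$. Near any such $g$ I would perform a further small $C^1$-perturbation to unfold the cycle and install a partially hyperbolic blender compatible with the splitting $E^s\oplus E^c\oplus E^u$; the orientation-preserving hypothesis is used here to secure the correct central topology of the blender. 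The quantitative output I aim for is: given $r>1$ and $N\in\mathbb{N}$, by a $C^1$-perturbation of prescribed small size one can arrange $\#\mathrm{Per}_n(g)\geq r^n$ for every $n\geq N$, with all these orbits hyperbolic and their continuations persisting under further $C^1$-small perturbations.

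Given this, the diagonal construction becomes routine. Fix a nonempty open $\mathcal{V}\subset\mathcal{U}$, and inductively pick $g_k\in\mathcal{V}$ with $\mathrm{dist}(g_{k-1},g_k)\leq 2^{-k}$ so that $g_k$ has $\#\mathrm{Per}_n(g_k)\geq k^n$ for every $n$ in a range $[N_{k-1}+1,N_k]$, witnessed by hyperbolic periodic orbits whose continuations survive throughout the $C^1$-ball of radius $\sum_{j>k}2^{-j}$ about $g_k$. The limit $g=\lim_k g_k\in\mathcal{V}$ then satisfies $r^{-n}\#\mathrm{Per}_n(g)\to\infty$ for every $r>1$, which is the claimed super-exponential divergence.

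I expect the main obstacle to be the quantitative blender statement in the previous paragraph: producing, via a single small $C^1$-perturbation near a heterodimensional cycle, hyperbolic periodic orbits of every period $n\geq N$ with count at least $r^n$. A partially hyperbolic blender carrying a full shift on $\lceil r\rceil+1$ symbols would suffice, provided one can be realised arbitrarily close to an arbitrary heterodimensional cycle and provided the symbolic dynamics genuinely covers every period, not merely a subsequence. This is where the $C^1$-topology appears essential: $C^1$-small perturbations admit the localised non-linear modifications needed to enlarge the number of blender strips, a freedom that is not available in $C^r$ for $r>1$, which is consistent with the introduction's remark that the higher-regularity case remains open.
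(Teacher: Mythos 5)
Your outer architecture matches the paper's: the connecting lemma (plus, in the paper, minimality of the strong foliations from \cite{BDU} to secure a robust intersection $W^u(P_1)\cap W^{ss}(P_2)$) gives the dense set of heterodimensional cycles, and the final step is a diagonal limit in which at each stage only finitely many hyperbolic periodic orbits of finitely many periods must persist. But the quantitative step you single out as the main obstacle is not merely hard in the form you propose -- it is impossible. A compact invariant set carrying a full shift on $k$ symbols has topological entropy at least $\log k$, whereas every $g$ in a $C^1$-neighbourhood $\mathcal{U}$ of $f$ satisfies $h_{\mathrm{top}}(g)\le \dim M\cdot\log^{+}\sup\|Dg\|\le H$ for a uniform constant $H<\infty$. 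So no $C^1$-small perturbation can install a blender or horseshoe on $\lceil r\rceil+1$ symbols once $r\ge e^{H}$, and your induction dies at a finite stage $k$. Super-exponential divergence cannot be produced by uniformly hyperbolic sets with many symbols.

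The paper's mechanism is entropy-neutral and quite different. It first perturbs to an \emph{SH-simple cycle}: linearized dynamics near $P_1,P_2$, affine transitions along heteroclinic points $Q_1\in W^u(P_1)\cap W^{ss}(P_2)$ and $Q_2\in W^u(P_2)\cap W^s(P_1)$, with center transition multiplier equal to $1$ and with $\log t_1,\log t_2$ rationally independent. An explicit computation then produces, for each pair $(m_1,m_2)$ with $m_2\log\lambda+m_1\log\mu\in(L,L')$ and $L'-L>2C$, a genuine periodic orbit of period $\sigma_1+m_2+\sigma_2+m_1$; the arithmetic lets one realize \emph{every} sufficiently large period by such an orbit, with central Lyapunov exponent tending to $0$. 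This ``one weak orbit for every large period'' statement is the real content of the argument (a subsequence of periods would only control $\limsup$, not $\liminf$), and it is precisely what a symbolic full shift would have handed you for free had it been available. Finally, for each weak orbit a $C^1$-small perturbation supported near that single orbit flattens the center return map to near-identity and then creates as many fixed points of it as desired -- this is where the $C^1$ flexibility genuinely enters, since a $C^1$-small perturbation of the identity on an interval can have arbitrarily many fixed points -- multiplying $\#\mathrm{Per}_n$ for that specific $n$ without creating any high-entropy invariant set. With this replacement for your quantitative step, your density argument and diagonal construction go through essentially as in the paper.
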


Theorem~\ref{theo:trans} is a consequences of 
general perturbation results together with 
the following analytic result.

First, let us state the analytic result.
\begin{theo}\label{theo:main}
Let $M$ be a three dimensional closed manifold.
Suppose $f$ satisfies the following:
\begin{itemize}
\item[(T1)] (Codimension-1 property) There are hyperbolic fixed points $P_1$ and $P_2$
of $f$ with 
$\uind(P_1)=2$ and  
$\uind(P_2)=1$.
\item[(T2)] (Simplicity property) The weakest unstable eigenvalue of $P_1$
and the weakest stable eigenvalue of $P_2$ 
are both real, positive and have multiplicity one.
\item[(T3)] (Existence of a strong heteroclinic intersection)
Let $W^{ss}(P_2)$ denote the 
stong stable manifold of $P_2$ corresponding 
to the strong stable eigenvalue of $Df(P_2)$.
Then $W^{u}(P_1) \cap W^{ss}(P_2) \neq \emptyset$.
\item[(T4)] (Existence of a quasi-transverse intersection)
$W^u(P_2) \cap W^s(P_1) \neq \emptyset$.
\end{itemize}
Then, there is a diffeomorphism $g$ 
which is arbitrarily $C^1$ close to $f$ such that
for every $r >1$ 
we have 
\[\lim\limits_{n\to \infty}\dfrac{\# \mathrm{Per}_n(g)}{r^n}=+\infty.\]
Notice that this implies $\liminf_{n \to \infty} \dfrac{\# \mathrm{Per}_n(g)}{r^n}=+\infty$.
\end{theo}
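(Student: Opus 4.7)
The strategy is to construct $g$ as a $C^1$-small perturbation of $f$ supported in a small neighbourhood of the heterodimensional cycle $\{P_1,P_2\}\cup\mathrm{orb}(q_1)\cup\mathrm{orb}(q_2)$, so that the interplay between the strong heteroclinic intersection $q_1\in W^u(P_1)\cap W^{ss}(P_2)$ and the quasi-transverse intersection $q_2\in W^u(P_2)\cap W^s(P_1)$ produces super-exponentially many periodic orbits of every sufficiently large period. First I would apply a Franks-type $C^1$-perturbation together with $C^1$-linearisation around $P_1$ and $P_2$ to put $f$ in normal form: affine in suitable charts with the prescribed eigenvalues (legitimate by (T1)--(T2)), with both heteroclinic orbits straightened so that transitions between the associated cross-sections are linear. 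Within this normal form one can freely tune the weakest eigenvalues $\lambda_1>1$ at $P_1$ and $\sigma_2<1$ at $P_2$ arbitrarily close to $1$, placing the central direction in a near-neutral regime.

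Next, for each pair $(n_1,n_2)$ of integers above a threshold $N_0$, I would study the first-return map $F_{n_1,n_2}$ of the excursion ``$n_1$ iterates near $P_1$, jump via $q_1$, $n_2$ iterates near $P_2$, return via $q_2$''. The essential feature, supplied by (T3), is that the crossing at $q_1$ lies along $W^{ss}(P_2)$, so it does not activate the central direction of $P_2$; consequently $F_{n_1,n_2}$ has a clean affine hyperbolic form. A further localised $C^1$-perturbation of blender type, in the spirit of \cite{BDF}, then yields for each such $(n_1,n_2)$ an invariant horseshoe whose number of branches grows with $n_1,n_2$. Concatenating $k$ excursions of types $(n_1^{(i)},n_2^{(i)})_{i=1}^{k}$ produces a periodic orbit of period $n=\sum_{i=1}^k(n_1^{(i)}+n_2^{(i)})+kC$ for a fixed transit constant $C$, and distinct cyclic equivalence classes of coding sequences give distinct orbits, so $\#\mathrm{Per}_n(g)$ is bounded below by the number of admissible itineraries of combinatorial length $n$ in the corresponding infinite-state blender graph. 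A direct estimate of this count, combined with the near-neutral tuning of $\lambda_1$ and $\sigma_2$, delivers a lower bound exceeding $r^n$ for every prescribed $r>1$ once $n$ is large.

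The genuine technical obstacle is the passage from super-exponential \emph{growth} (a $\limsup$ bound, essentially the content of \cite{BDF}) to super-exponential \emph{divergence}, i.e.\ from a statement holding along a subsequence of ``resonant'' periods to one that is effective for \emph{every} sufficiently large period. To achieve this I would arrange the perturbation so that the set of realisable return times $n_1+n_2+C$ has density $1$ in $\mathbb{N}$, forcing every large $n$ to admit many admissible decompositions $n=\sum_i(n_1^{(i)}+n_2^{(i)})+kC$; this is precisely what requires pushing $\lambda_1,\sigma_2$ into the near-neutral regime and is the delicate point of the argument. A secondary issue is that the theorem counts points of least period exactly $n$: one must discard orbits whose coding has a non-trivial cyclic period, but this subtracted contribution is at most $\sum_{d\mid n,\,d<n}\#\mathrm{Per}_d(g)$, which an entropy-type estimate shows to be exponentially smaller than the main term and hence absorbed in the super-exponential lower bound.
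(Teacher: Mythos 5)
Your outline matches the paper's in its first stage (linearise at $P_1,P_2$, straighten the two heteroclinic orbits, exploit (T3) to decouple the centre coordinate), but two of your key steps do not work as stated.

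First, you cannot ``freely tune the weakest eigenvalues $\lambda_1$ and $\sigma_2$ arbitrarily close to $1$'' by an arbitrarily $C^1$-small perturbation: Franks-type perturbations only move the derivative at the fixed points by $O(\varepsilon)$, so the centre eigenvalues stay near their original values. Since your mechanism for making every large period realisable rests entirely on this near-neutral tuning, the crux of the theorem (the passage from $\limsup$ to $\liminf$) is not actually addressed. The paper achieves neutrality \emph{dynamically} rather than by changing eigenvalues: after arranging (by a small perturbation) that $\log$ of the two centre multipliers $\mu,\lambda$ are rationally independent and that the transition maps act as isometries in the centre direction, the return-map fixed-point equation in the centre coordinate reads $Y=q_1/(\mu^{m_1}\lambda^{m_2}-1)$, and it is solvable inside the transition box exactly when $m_2\log\lambda+m_1\log\mu$ lies in a fixed window $(L,L')$ above $\log\tilde\alpha$. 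Choosing $L'-L>2\max\{|\log\lambda|,|\log\mu|\}$ lets one increment $m_1+m_2$ by exactly $1$ at each step while staying in the window, which is what produces a weak periodic point of \emph{every} sufficiently large period; this is the step your proposal is missing.

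Second, your counting argument cannot yield a super-exponential lower bound. For a fixed diffeomorphism $g$, the number of admissible itineraries of combinatorial length $n$ in a coding graph with boundedly many branches per excursion grows at most exponentially (at rate the entropy of the graph), so ``a direct estimate of this count'' can never exceed $r^n$ for all $r>1$. The super-exponential divergence must come from the other ingredient of the \cite{BDF} scheme: each weak periodic orbit of period $n$ (centre Lyapunov exponent $\to 0$) can be converted, by an $\varepsilon$-small perturbation supported near that single orbit and acting only in the centre direction, into arbitrarily many periodic points of the same period $n$; one then performs a countable family of such perturbations with disjoint supports and summable sizes, prescribing in advance a super-exponential target $a_n$ for each $n$. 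Your entropy-type bound for discarding non-primitive codings is fine but secondary; the missing idea is that the multiplicity per period is injected by perturbation, not read off from symbolic dynamics.
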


One important condition in the assumptions of
Theorem~\ref{theo:main} 
is that we assume 
$P_1, P_2$ are \emph{fixed} points of $f$.
In this article, we are interested in the behavior of 
lower limit of the number of periodic points. 
In many situations, the difference whether the periodic 
orbit we are interested in has non-trivial period or not 
can be overcome by taking power of the dynamics. 
On the other hand, as we will see later, this strategy does 
not work in a simple way for the investigation of the lower limit. 
The investigation of to what extent we can relax this 
fixed point assumption would be an interesting topic, but
we will not pursue this problem in this paper.

The following perturbation result tells us that 
for systems which satisfy the hypothesis of 
Theorem~\ref{theo:trans}, we can obtain the 
assumptions of Theorem~\ref{theo:main} 
up to an arbitrarily small $C^1$ perturbation. 

\begin{prop}\label{prop:pert}
Let $f$ be a $C^1$-robustly transitive 
diffeomorphism on a smooth compact 
three dimensional manifold such that the entire 
manifold is strongly partially hyperbolic. 
Assume that there are two hyperbolic fixed 
points $P_1$ and $P_2$ whose indices 
are $2$ and $1$ respectively and $Df$ preserves 
the orientation. Then, there exists $g\in\mathrm{Diff}^1(M)$
arbitrarily $C^1$ close to $f$ such that 
$g$ satisfies the hypotheses
(T1-4) of
Theorem~\ref{theo:main}.
\end{prop}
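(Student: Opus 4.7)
The plan is to verify conditions (T1)--(T4) of Theorem~\ref{theo:main} one at a time. (T1) is part of the hypothesis, (T2) is automatic from the strongly partially hyperbolic structure plus orientation preservation, and (T3)--(T4) are obtained by two successive $C^1$-small perturbations using connecting-lemma techniques. A crucial ordering point is that (T3) asks for an intersection of a $2$-dimensional and a $1$-dimensional submanifold in the $3$-manifold $M$ (hence transverse and $C^1$-robust once produced), whereas (T4) is only a quasi-transverse intersection of two $1$-dimensional manifolds, which is not robust. I therefore install (T3) first and (T4) afterwards, with the second perturbation supported away from the intersection produced in the first step.

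For (T2), observe that $\dim E^c = 1$ and $Df$ preserves the orientations of $E^s, E^c, E^u$. At each fixed point, the action of $Df$ on the one-dimensional line $E^c$ is multiplication by a positive real scalar, which is automatically a simple eigenvalue. The dominated splittings $(E^s\oplus E^c)\oplus E^u$ and $E^s\oplus(E^c\oplus E^u)$ force this center eigenvalue to be the weakest unstable eigenvalue at $P_1$ and the weakest stable eigenvalue at $P_2$, which is exactly (T2). Since (T2) is $C^1$-open, it survives all subsequent small perturbations.

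To obtain (T3), I plan to produce an intersection $W^u(P_1)\cap W^{ss}(P_2)$ by a connecting-lemma perturbation. Robust transitivity of $f$ supplies true orbits visiting any prescribed pair of open sets, hence pseudo-orbits from a branch of $W^u(P_1)$ to a point on $W^{ss}(P_2)$; Hayashi's $C^1$ connecting lemma, in its invariant-manifold version due to Bonatti--Crovisier and applied to the $f$-invariant leaf $W^{ss}(P_2)$, then yields $g_1$ arbitrarily $C^1$ close to $f$ with $W^u(P_1;g_1)\cap W^{ss}(P_2;g_1)\neq\emptyset$. Because $\dim W^u(P_1)+\dim W^{ss}(P_2)=3=\dim M$, the intersection is transverse for a generic choice of perturbation, hence $C^1$-robust. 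For (T4), a second $C^1$-small connecting-lemma perturbation produces $W^u(P_2)\cap W^s(P_1)\neq\emptyset$ with support contained in a ball disjoint from the (T3) intersection; (T1) persists because hyperbolic fixed points are robust, and the resulting intersection of two $1$-dimensional manifolds in $M$ is automatically quasi-transverse, as required.

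The main obstacle is step (T3): strong stable manifolds are not the classical targets of Hayashi's connecting lemma, and a naive perturbation may either miss the genuinely smaller $W^{ss}(P_2)$ or distort the partially hyperbolic structure. If a strong-stable version of the connecting lemma is not available in the form required, the fallback is a two-stage argument internal to $W^s(P_2)$: first use the classical connecting lemma to obtain a $C^1$-robust transverse intersection of $W^u(P_1)$ with the full stable manifold $W^s(P_2)$, producing a $1$-dimensional arc $\gamma\subset W^s(P_2)$; then, by a $C^1$-small perturbation supported in a chart near $P_2$ and acting along the center direction, slide $\gamma$ transversally across the smooth curve $W^{ss}(P_2)$ inside the $2$-dimensional disk $W^s(P_2)$. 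An alternative route is to invoke the density result of Bonatti--D\'iaz--Ures guaranteeing that $C^1$-generically one of the strong foliations of a robustly transitive partially hyperbolic $3$-diffeomorphism is minimal; if the strong stable foliation is minimal, then $W^{ss}(P_2)$ is dense in $M$ and meets the $2$-dimensional $W^u(P_1)$ automatically after an arbitrarily small perturbation.
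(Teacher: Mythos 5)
Your overall architecture ((T1) is hypothesis, (T2) is automatic from the one-dimensionality of $E^c$, domination and orientation preservation, then install the robust condition (T3) before the fragile condition (T4)) is the same as the paper's, and the step you relegate to a last-resort ``alternative route'' --- invoking the Bonatti--D\'iaz--Ures theorem that for such robustly transitive, orientation-preserving, strongly partially hyperbolic diffeomorphisms one of the strong foliations can be made minimal by an arbitrarily small (indeed open and dense) $C^1$ perturbation --- is in fact exactly how the paper obtains (T3) (its Lemma on the open dense set $\mathcal{V}$). The (T4) step via Hayashi's connecting lemma, using transitivity to produce the required sequences $\{y_n\},\{k_n\}$ between fundamental domains of $W^u(P_2)$ and $W^s(P_1)$, also matches the paper.

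There are, however, two genuine problems. First, your \emph{primary} mechanism for (T3) --- aiming a connecting-lemma perturbation at $W^{ss}(P_2)$ --- is not justified: the connecting lemma (in Hayashi's or Bonatti--Crovisier's form) produces an intersection of $W^u(P_1;g)$ with the \emph{stable} manifold $W^s(P_2;g)$, and landing on the distinguished one-dimensional leaf $W^{ss}(P_2)$ inside the two-dimensional $W^s(P_2)$ is an extra condition that the lemma does not control; moreover the perturbation itself moves $W^{ss}(P_2)$. Your two-stage fallback (create a transverse arc $\gamma=W^u(P_1)\pitchfork W^s(P_2)$ and then push a deep forward iterate of $\gamma$ across $W^{ss}(P_2)$ in the center direction) is a legitimate standard device, but as written it omits the quantifier bookkeeping (the displacement must be $o(1)$ relative to the support radius, and the support must avoid $P_2$ and the rest of the orbit segment), so it is a sketch rather than a proof. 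Second, even the BDU route as you state it has a gap: the theorem only guarantees that \emph{one} of the two strong foliations is minimal, and you treat only the case of the strong stable one. If instead the strong unstable foliation is minimal, one obtains $W^{uu}(P_1)\cap W^s(P_2)\neq\emptyset$, which is (T3) for $f^{-1}$ with the roles of $P_1$ and $P_2$ exchanged; the paper resolves this by concluding that $g$ \emph{or} $g^{-1}$ satisfies (T1--4), which suffices for the application since $\mathrm{Per}_n(g)=\mathrm{Per}_n(g^{-1})$. You need to add this case distinction (or explain why it can be avoided).
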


The proof of Proposition~\ref{prop:pert} will be given 
in Section~3.

Let us give a ``local version'' of Theorem~\ref{theo:trans}.
In Theorem~\ref{theo:trans}, we stated the result under 
the condition that the diffeomorphism is robustly transitive. 
While this condition is easy to understand, 
it is not the essential one which we need 
to reach the conclusion.
Below we give Theorem~\ref{theo:hclass}, 
in which the assumption for the super exponential 
divergence is stated in terms of homoclinic classes
and this statement makes it easier to grasp about the 
mechanism of the result.  

Let us recall the notion of homoclinic classes. 
A \emph{homoclinic class} of a hyperbolic periodic 
saddle $P$ of a diffeomorphism $f$,
denoted by $H(P; f)$ or $H(P)$, 
is defined to be the closure of transversal
intersections of the stable and unstable manifolds of $P$. 
We can equivalently define $H(P)$ as the closure of all 
hyperbolic periodic saddles $Q$ homoclinically related to $P$
 (i.e. the stable manifold of $P$ transversally intersects the 
 unstable manifolds of $Q$ and vice versa). 
Homoclinic classes are always invariant and transitive, 
but not necessarily hyperbolic in general (see for instance \cite{ABCDW}).

\begin{theo} \label{theo:hclass}
Let $M$ be a three dimensional closed manifold. Suppose $f\in \mathrm{Diff}^1(M)$ satisfies the following:
\begin{itemize}
\item There are hyperbolic fixed points $P_1$ and $P_2$ of $f$ contained in the same homoclinic class $H(P_1)$ with
$\uind(P_1)=2$ and $\uind(P_2)=1$;
\item The weakest unstable eigenvalue of $P_1$ and the weakest stable eigenvalue of $P_2$ are both real, positive and have multiplicity one;
\item $W^u(P_1)$ intersects $W^{ss}(P_2)$ transversally.
\end{itemize}
Then, there exists an arbitrarily small $C^1$-perturbation $g$ of $f$ such that $g$ is super exponential divergent.  
\end{theo}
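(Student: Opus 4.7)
The plan is to derive Theorem~\ref{theo:hclass} from Theorem~\ref{theo:main} by producing the missing hypothesis (T4) via a $C^1$-small perturbation. The three bullets of Theorem~\ref{theo:hclass} translate directly into (T1), (T2), and (T3) of Theorem~\ref{theo:main}: the first two bullets coincide with (T1) and (T2) verbatim, and the transversal intersection in the third bullet is stronger than the plain non-emptiness required by (T3). Each of these is an open condition in the $C^1$-topology: hyperbolicity of a fixed point is $C^1$-robust, the condition that the weakest eigenvalues be real, positive and simple is open on $Df(P_i)$, and the transversal intersection of invariant manifolds is $C^1$-stable. Thus it suffices to produce an arbitrarily $C^1$-small perturbation $f_1$ of $f$ that, in addition, satisfies (T4), namely $W^u(P_2;f_1)\cap W^s(P_1;f_1)\neq\emptyset$.

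To obtain (T4), I would apply Hayashi's $C^1$ connecting lemma (in the form systematized by Bonatti--Crovisier). Since $P_2\in H(P_1)$, the point $P_2$ is accumulated by transverse homoclinic points of $P_1$; in particular $P_2\in\overline{W^s(P_1)}$. Because $P_2$ itself lies on $W^u(P_2)$, pieces of $W^s(P_1)$ pass arbitrarily close to the local unstable manifold of $P_2$. The connecting lemma then yields an arbitrarily $C^1$-small perturbation $f_1$ of $f$, supported in a thin tube along a suitable orbit segment sitting away from $P_1$ and $P_2$ themselves, under which the one-dimensional manifolds $W^u(P_2)$ and $W^s(P_1)$ actually meet. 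By choosing the support of the perturbation disjoint from small neighborhoods of the fixed points and of the strong heteroclinic point provided by the third hypothesis, the conditions (T1)--(T3) remain valid for $f_1$ thanks to their $C^1$-openness.

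With $f_1$ satisfying (T1)--(T4), Theorem~\ref{theo:main} produces a further arbitrarily $C^1$-small perturbation $g$ of $f_1$ that is super-exponentially divergent; by the triangle inequality in the metric $\mathrm{dist}$, the diffeomorphism $g$ is also arbitrarily $C^1$-close to $f$. The main technical obstacle lies in the middle step: one must carefully localize the perturbation so as to create the new quasi-transverse intersection while preserving the eigenvalue structure at $P_1$ and $P_2$ and the pre-existing transversal strong intersection $W^u(P_1)\pitchfork W^{ss}(P_2)$. The restriction to $C^1$-regularity is essentially dictated by this use of the connecting lemma and is consistent with the $C^1$ scope of the other results in the paper.
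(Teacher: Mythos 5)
Your proposal is correct and follows essentially the same route as the paper: check that (T1)--(T3) hold and are $C^1$-open, create the quasi-transverse intersection (T4) by Hayashi's connecting lemma applied with $a_f=P_2$ and $b_f=P_1$, then invoke Theorem~\ref{theo:main} and conclude by the triangle inequality. The only minor difference is in how the hypotheses of the connecting lemma are verified --- the paper uses a dense forward orbit of the transitive set $H(P_1)$ accumulating on fundamental domains of $W^u(P_2)$ and $W^s(P_1)$, while you use the accumulation of $W^s(P_1)$ on $P_2$; note that your version still requires the standard local-hyperbolicity step pushing those points forward to a fundamental domain of $W^u_{\mathrm{loc}}(P_2)$ \emph{away from} $P_2$, since the lemma demands an accumulation point $y\in W^u(a_f)$ with $y\neq a_f$.
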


The proof of Theorem~\ref{theo:hclass} will be given 
in Section~\ref{s:creSH}.
 
 Let us see the strategy of the proof of Theorem~\ref{theo:main}.
Given a heterodimensional cycle associated to two hyperbolic fixed points $P_1$ and $P_2$ of $f$, in other words, $P_1$ and $P_2$ are index adapted with $W^u(P_1)\cap W^s (P_2)\not=\emptyset$ and $W^u(P_2)\cap W^s(P_1)\not=\emptyset$. Following the 
argument in 
\cite{BD}, 
we linearize the dynamics around $P_1$ and $P_2$.
We also make the transition map along the heteroclinic points $Q_1$ and 
$Q_2$ affine, which are compatible with the 
linear maps around $P_1$ and $P_2$. 
These dynamics are called a \emph{simple cycle}. 
A direct calculation shows that
we can find a sequence of periodic points 
with weak center Lyapunov exponents. 
Thus, by exploiting the flexibility of the $C^1$-topology, 
we can increase the number of periodic points 
as much as we want by perturbing 
in the center direction. 

This is the strategy of the proof 
of \cite{BDF}. 
In our problem, we furthermore need to investigate
the \emph{frequency of the period of the weak 
periodic points}. To be more precise, we need to 
confirm the occurrence of periods of weak 
periodic points for \emph{every} sufficiently large integer. 
By investigating the above calculation carefully, we can 
observe that, 
under certain quantitative assumption on 
the characteristics of the simple cycle, 
the periods exhaust all sufficiently large integers eventually by an arbitrarily small $C^1$ perturbation. In our proof, an additional hypothesis on the simple cycle is needed. We call our simple cycle as \emph{SH-simple cycle}
(where SH stands for ``Strongly Heteroclinic''), meaning that in addition, the unstable manifold of $P_1$ intersects the strong stable manifold of $P_2$.   

\medskip

A natural question regarding Theorem~\ref{thm1}
is if one could replace ``dense'' to some stronger 
condition such as residual or open and dense. 
For instance, one might wonder the following:
\begin{ques}
Does there exist an open subset $\mathcal{U}$ of $\mbox{Diff}^1(M)$ such that generically in $\mathcal{U}$, diffeomorphisms are super exponential divergent?
\end{ques}
While we do not have an answer, 
in Section 6 we will prove one result about
the lower limit of the number of periodic points 
valid for diffeomorphisms 
in a residual subset of $\mbox{Diff}^r(M)$ where 
$1\le r\le \infty$, based on the argument of 
Kaloshin \cite{Ka}. 
\begin{theo}\label{theo:resi}
Given $1 \leq s \leq +\infty$ and 
a super-exponential sequence $(a_n)$,
there exists an residual set 
$\mathcal{R}$ of $\mathrm{Diff}^s(M)$
such that the following holds: for every 
$f \in \mathcal{R}$, we have $\liminf_{n \to \infty}\#\mathrm{Per}_n(f) / a_n \to 0$. 
\end{theo}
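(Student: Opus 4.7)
The plan is to exhibit $\mathcal{R}$ as a countable intersection of open dense sets. For each $N \ge 1$, define
\begin{equation*}
\mathcal{U}_N \;=\; \bigcup_{n \ge N}\, \mathcal{U}_{N}^{(n)},
\qquad \mathcal{U}_{N}^{(n)} \;=\; \{ f \in \mathrm{Diff}^s(M) : \#\mathrm{Per}_n(f) < a_n/N \text{ and every point of } \mathrm{Per}_n(f) \text{ is hyperbolic}\}.
\end{equation*}
The candidate residual set is $\mathcal{R} := \bigcap_{N \ge 1} \mathcal{U}_N$; for any $f \in \mathcal{R}$ and any $N$, membership in $\mathcal{U}_N$ supplies some $n_N \ge N$ with $\#\mathrm{Per}_{n_N}(f)/a_{n_N} < 1/N$, forcing $\liminf_{n \to \infty} \#\mathrm{Per}_n(f)/a_n = 0$.

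Openness of each $\mathcal{U}_{N}^{(n)}$ is a standard persistence argument. If $f \in \mathcal{U}_{N}^{(n)}$ has periodic points $x_1, \ldots, x_k$ of period $n$, the hyperbolicity of each $x_i$ says that $Df^n(x_i) - \mathrm{Id}$ is invertible, so the implicit function theorem yields disjoint isolating neighborhoods $U_i$ of $x_i$ in which $g^n$ has exactly one fixed point (which remains hyperbolic) for every $g$ sufficiently $C^s$-close to $f$. On the compact complement $M \setminus \bigcup_i U_i$, the continuous function $x \mapsto d(f^n(x), x)$ is bounded below by a positive constant, so the same holds for $g^n$ when $g$ is close to $f$, ruling out further period-$n$ fixed points of $g$. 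Hence $\#\mathrm{Per}_n$ is locally constant at $f$, so $\mathcal{U}_{N}^{(n)}$ and a fortiori $\mathcal{U}_N$ is open.

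For density I would invoke the Kupka--Smale strengthening of the Artin--Mazur theorem, in the spirit of \cite{AM} and \cite{Ka}: the set $\mathcal{D}$ of $f \in \mathrm{Diff}^s(M)$ all of whose periodic points are hyperbolic and which satisfy $\#\mathrm{Per}_n(f) \le C_f R_f^n$ for all $n$ (with constants depending on $f$) is dense in $\mathrm{Diff}^s(M)$. One way to establish this is to first approximate a given $f$ by an analytic diffeomorphism — for which the isolated periodic points of each period are automatically finite in number and grow at most exponentially — and then apply a further arbitrarily small analytic perturbation to enforce hyperbolicity via a Sard-type argument within a finite-dimensional family of analytic perturbations, without creating additional periodic points. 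Given $f \in \mathcal{D}$ and $N$, the super-exponentiality of $(a_n)$ forces $a_n/R_f^n \to \infty$, so some $n \ge N$ satisfies $C_f R_f^n < a_n/N$, placing $f \in \mathcal{U}_N^{(n)} \subset \mathcal{U}_N$. Hence $\mathcal{D} \subset \mathcal{U}_N$, each $\mathcal{U}_N$ is open and dense, and $\mathcal{R}$ is residual.

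The principal obstacle is producing the dense set $\mathcal{D}$: a naive two-stage perturbation (first enforce the exponential bound on $\#\mathrm{Per}_n$, then perturb to achieve hyperbolicity) could create new periodic points in the second stage and invalidate the bound. Working inside a finite-dimensional analytic family sidesteps this by turning both requirements into algebraic conditions that can be imposed simultaneously at a generic parameter, which is essentially the mechanism of \cite{Ka}.
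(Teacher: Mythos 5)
Your overall architecture is the same as the paper's: open sets cut out by an upper bound $\#\mathrm{Per}_n(f)<a_n/N$ for some $n\ge N$, density supplied by Kaloshin's extension of the Artin--Mazur theorem, and a Baire intersection. However, your openness argument for $\mathcal{U}_N^{(n)}$ has a genuine gap. You impose hyperbolicity only on the points of \emph{least} period $n$, but $\mathrm{Fix}(f^n)$ also contains every periodic point whose period properly divides $n$. Such points lie in the compact complement $M\setminus\bigcup_i U_i$, so the function $x\mapsto d(f^n(x),x)$ vanishes there and is not bounded below by a positive constant. Worse, if such a lower-period point $p$ (of period $m\mid n$) is non-hyperbolic, an arbitrarily small perturbation can create new orbits of least period $n$ near $p$ (for instance when $Df^m(p)$ has an eigenvalue which is a primitive $(n/m)$-th root of unity), or a non-hyperbolic point of least period $n$; either way $g$ leaves $\mathcal{U}_N^{(n)}$, so $\mathcal{U}_N^{(n)}$ is not open and ``$\#\mathrm{Per}_n$ is locally constant at $f$'' is false as stated. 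The repair is cheap precisely because Kaloshin's dense set $\mathcal{D}$ consists of diffeomorphisms \emph{all} of whose periodic points are hyperbolic: for $f\in\mathcal{D}$ the whole of $\mathrm{Fix}(f^n)$ is finite and hyperbolic, your persistence argument then applies verbatim and yields an open neighbourhood of $f$ inside $\mathcal{U}_N^{(n)}$. So you should either add ``every periodic point of period dividing $n$ is hyperbolic'' to the definition of $\mathcal{U}_N^{(n)}$, or take $\mathcal{R}=\bigcap_N \mathrm{int}\,(\mathcal{U}_N)$ and check density of the interiors via $\mathcal{D}$.

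A second point: the case $s=\infty$ is not covered by your density argument. The Kaloshin density result (Proposition~\ref{prop:kalo}) is only known for $1\le s<\infty$, and the paper explicitly flags this; it handles $s=\infty$ by choosing $t$ large enough that $C^t$-proximity forces $C^\infty$-proximity, approximating $f$ in $C^t$ by some $h\in\mathcal{D}^t$, and then using the $C^t$-openness of the condition to replace $h$ by a nearby $C^\infty$ diffeomorphism still satisfying it. Your sketch of density via analytic approximation plus a Sard-type argument inside a finite-dimensional family is exactly the nontrivial content of \cite{Ka}; as written it is not a proof, and you correctly identify (but do not resolve) the obstruction that the hyperbolicity perturbation may create new periodic points. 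Citing \cite{Ka} for $s<\infty$ and adding the two-step $C^t$ approximation for $s=\infty$ closes both issues.
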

Thus we cannot extend Theorem~\ref{thm1}
in a straightforward way and 
this shows the significance of Theorem~\ref{thm1}. 
Notice that Theorem~\ref{theo:resi}
does not answer Question~1, because 
there is no ``slowest'' super exponentially 
increasing sequence.
  
\medskip

This paper will be organized as follows. In Section 2, after giving some basic definitions and notations on SH-simple cycles, we provide the proof of Theorem~\ref{theo:main} by assuming 
several results which will be proved in the following 
sections. 
In Section 3, we will discuss the proof of Theorem~\ref{theo:hclass} and
how to prove Theorem~\ref{theo:trans} from Theorem~\ref{theo:main}. 
Section 4 is devoted to the 
proof of Proposition~\ref{prop:SHsim},
a perturbation result for obtaining SH-simple cycles. 
In Section 5, we prove Proposition~\ref{prop:ana}, 
an analytic result for SH-simple cycles is shown. 
In Section 6, by using a theorem of \cite{Ka}, we prove 
Theorem~\ref{theo:resi}, a generic result of super exponential divergence with respect to a given speed.  

\bigskip

\noindent
{\bf Acknowledgements.} \,\, This paper has been supported by the the JSPS KAKENHI Grant Number 18K03357. XL is supported by the Youth Program of National Natural Science Foundation of China (11701199) and the Fundamental Research Funds for the Central Universities, HUST: 2017 KFYXJJ095.
KS thank the warm hospitality of 
Huazhong University of Science and Technology.
We thank Masayuki Asaoka and Ken-ichiro Yamamoto
for their suggestions and comments.


\section{Preliminaries and Strategies}

In this section, we prepare some definitions and cite
known results which are used throughout this paper.

Then, we state some propositions which will be used 
for the proof of Theorem~\ref{theo:main}. Finally, 
assuming these propositions we give the 
proof of Theorem~\ref{theo:main}.

\subsection{SH-simple cycles}

Let us give the definition of simple cycles. 
In this paper, it is more convenient if we define a wider 
class of simple cycles, which we call \emph{SH-simple} cycles.
Let us give the definition of it.

Let $\mathbb{D}_r^n := \{ x \in \mathbb{R}^n \mid \|x\| < r\}$, 
where $\| \cdot \|$ denotes the Euclidian norm and $r$ is some positive real number. 
Let $d_s, d_c, d_u$ be positive integers and put $d = d_s + d_c + d_u$
and $\mathbf{d} = (d_s, d_c, d_u)$.
A subset 
$\mathbb{D}^{\mathbf{d}} = \mathbb{D}_{r_s}^{d_s} \times \mathbb{D}_{r_c}^{d_c} 
\times \mathbb{D}_{r_u}^{d_u}$ of $\mathbb{R}^d$
is called a \emph{polydisc} of $\mathbb{R}^d$ of index $\mathbf{d}$.
We call the numbers $\mathbf{d}$ and
$(r_s, r_c, r_u)$ the \emph{index} and 
the \emph{size} of the 
polydisc $\mathbb{D}^{\mathbf{d}}$ respectively.
In the following, we only consider the case $d_c =1$. Thus 
we have $\mathbf{d} = (d_s, 1, d_u)$.

We first prepare two definitions, which describe 
the local dynamics around the fixed point and the 
transition dynamics near the heteroclinic point 
of SH-cycles respectively.

\begin{defi}
Let $f\in\mathrm{Diff}^1(M)$ and $P$ be its fixed point.
We say that a coordinate neighbourhood $(U, \phi)$ around $P$ is an 
\emph{linearized neighbourhood of index $\mathbf{d}$}
if the followings hold:
\begin{itemize}
\item $\phi(U) \subset \mathbb{R}^{\mathbf{d}}$ is a polydisc of index $\mathbf{d}$,
that is, $\phi(U) = \mathbb{D}_{r_s}^{d_s} \times \mathbb{D}_{r_c}^{1} 
\times \mathbb{D}_{r_u}^{d_u}$ for some 
size $(r_s, r_c, r_u)$.
\item There exist a contracting linear map $\Lambda: \mathbb{R}^{d_s} \to \mathbb{R}^{d_s}$,
a linear map $t: \mathbb{R} \to \mathbb{R}$ and 
an expanding linear map $M: \mathbb{R}^{d_u} \to \mathbb{R}^{d_u}$
such that the following holds:
For every $x \in U$, if 
$\phi(x) = (x_s, x_c, x_u)$ satisfies 
$t(x_c) \in \mathbb{D}_{r_c}^1$ and $M(x_u) \in \mathbb{D}_{r_u}^{d_u}$, 
then we have 
\[\phi(f(x)) = (\Lambda(x_s), t(x_c), M(x_u)).\]
We call the linear map $(x_s, x_c, x_u) \mapsto (\Lambda(x_s), t(x_c), M(x_u))$
the \emph{linearization} of $f$ near $P$.
We also call the set of the points $x$ satisfying
$t(x_c) \in \mathbb{D}_{r_c}^1$ and $M(x_u) \in \mathbb{D}_{r_u}^{d_u}$
the linearized region.
\end{itemize}
\end{defi}

\begin{defi}
Let $f\in\mathrm{Diff}^1(M)$, $P_1$ and $P_2$ be 
its adapted hyperbolic fixed points.
Suppose that $P_i$ have linearized coordinate neighbourhoods $(\phi_i, U_i)$
for $i=1$ and $2$ with linearization 
$(x_s, x_c, x_u) \mapsto (\Lambda_i(x_s), t_i(x_c), M_i(x_u))$ and assume that 
there exists a point $Q \in W^u(P_1) \cap W^s(P_2) \cap U_1$.
Let $\sigma >0$ be the least positive integer such that $f^{\sigma}(Q) \in U_2$ holds. 
We say that $Q$ is \emph{an adapted transition point} with respect to 
$(U_1, \phi_1)$
and
$(U_2, \phi_2)$
if 
there exist positive real numbers $\kappa_s, \kappa_c$ and $\kappa_u$
such that the followings hold:
\begin{itemize}
\item There exists a neighbourhood $K \subset U_1$ of $Q$ 
such that
$\phi_1(K)$ has the form 
$\phi_1(Q)+ \mathbb{D}_{\kappa_s}^{d_s} \times \mathbb{D}_{\kappa_c}^{1} \times \mathbb{D}_{\kappa_u}^{d_u}$,
that is, $\phi_1(K)$ is a polydisc centered at $\phi(Q)$
of size $(\kappa_s, \kappa_c, \kappa_u)$.
\item Furthermore, $f^i(K) \cap (U_1 \cup U_2) = \emptyset$ for every $i =1, \ldots, \sigma -1$ and 
$f^{\sigma}(K) \subset U_2$. 
\item 
There exist three linear maps $\tilde{\Lambda}: \mathbb{R}^{d_s} \to \mathbb{R}^{d_s}$,
$\tilde{t}: \mathbb{R} \to \mathbb{R}$ and 
$\tilde{M}: \mathbb{R}^{d_u} \to \mathbb{R}^{d_u}$
such that the following holds:
For every $(X, Y, Z) \in \mathbb{R}^{\mathbf{d}}$ such that 
$\phi_1(Q)+(X, Y, Z) \in \phi_1(K)$ holds, we have 
\[
\phi_2 \circ  f^{\sigma} \circ \phi_1^{-1}(\phi_1(Q)+(X, Y, Z)) = 
\phi_2(f^{\sigma}(Q))+(\tilde{\Lambda}(X), \tilde{t}(Y), \tilde{M}(Z)).
\]
\end{itemize}
We call $\tilde{t}$ the \emph{center multiplier} of the transition map $f^{\sigma}$
and $K$ the \emph{transition region}. 
\end{defi}

Given two hyperbolic fixed points $P_1$ and $P_2$ of 
different indices, 
we say that they form a \emph{heterodimensional cycle} 
if the stable manifold of $P_1$ intersects the unstable manifold of $P_2$ and vice versa. Heterodimensional cycles are realized as one of the typical mechanisms which causes the 
non-hyperbolicity of the dynamics.   

Now we are ready to state the definition of SH-simple cycles.

\begin{defi}
Let $f \in \mathrm{Diff}^1(M)$. 
Suppose there is a heterodimensional cycle associated to $P_1$, $P_2$ and 
heteroclinic points 
$Q_1 \in W^u(P_1) \cap W^s(P_2)$ and 
$Q_2 \in W^u(P_2) \cap W^s(P_1)$. 
We say that the heterodimensional cycle is \emph{SH-simple} if the followings hold (see Figure.~\ref{fig:SH}):
\begin{itemize}
\item There are linearized coordinates $(U_i, \phi_i)$ around $P_i$ for $i=1, 2$.
\item $Q_i \in U_i$ and 
they are adapted heteroclinic points associated to 
$(U_i, \phi_i)$ and $(U_{i+1}, \phi_{i+1})$ 
with transition maps $f^{\sigma_i}$ for $i =1, 2$, 
where we set 
$(U_{3}, \phi_{3})= (U_{1}, \phi_{1})$.
\item 
The $\mathbb{R}^{d_u}$-coordinate of $\phi_1(Q_1)$ is 
$\boldsymbol{0}^{d_u}$.
\item 
The $\mathbb{R}^{d_c}$-coordinate of 
$\phi_2(f^{\sigma_1}(Q_1))$ is $0$.
\item The center multipliers of the transition maps $f^{\sigma_i}\ (i=1,2)$ equal to 1.
\end{itemize}
\end{defi}

\begin{figure}[t]
\begin{center}
\includegraphics[width=12cm]{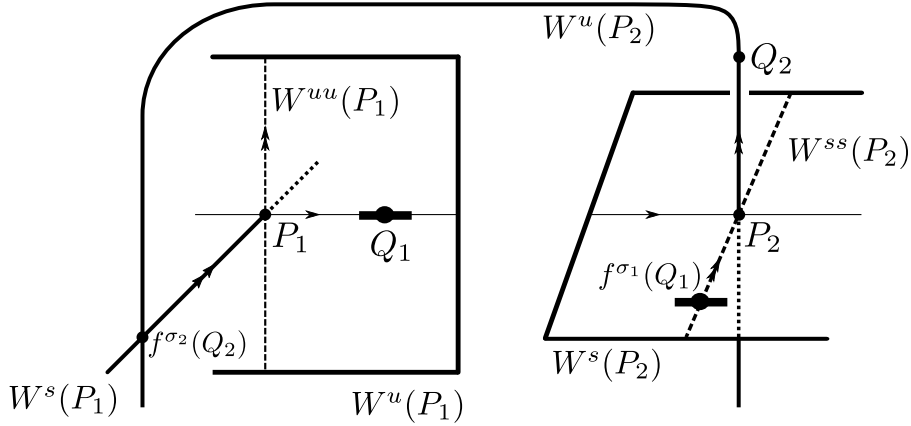}
\caption{An illustration of an SH-simple cycle. }
\label{fig:SH}
\end{center}
\end{figure}

\begin{rema}
Let $f$ be a diffeomorphism having a SH-simple cycle.
According to the coordinates of $Q_1$ and $Q_2$, 
we may assume that they have the following forms:
\begin{align*}
\phi_1(Q_1) 
&= (\boldsymbol{0}^{d_s}, q_1, \boldsymbol{0}^{d_u}), &
\phi_2(f^{\sigma_1}(Q_1)) 
&= (\boldsymbol{q}'_1, 0, \boldsymbol{0}^{d_u}). \\
\phi_2(Q_2) 
&= (\boldsymbol{0}^{d_s}, 0, \boldsymbol{q}_2), &
\phi_1(f^{\sigma_2}(Q_2)) 
&= (\boldsymbol{q}_2', 0, \boldsymbol{0}^{d_u}).
\end{align*}
\end{rema}

\subsection{Main perturbation result}

In \cite{BD}, it was proven that given a 
diffeomorphism having a heterodimensional 
cycle, by adding an arbitrarily small $C^1$-perturbation 
one can obtain another diffeomorphism such that the 
continuation of the heterodimensional cycle is simple, 
that is, the local dynamics around it is given by 
locally affine maps.

One of the main step of the proof of our theorem 
is that one can obtain similar affine dynamics from 
strongly heteroclinic cycles. 

\begin{prop}\label{prop:SHsim}
Let $f\in\mathrm{Diff}^1(M)$ which 
has two hyperbolic fixed points $P_1$, $P_2$ and
satisfies all the assumptions (T1-4) in Theorem~\ref{theo:main}. Then, there exists $g\in\mathrm{Diff}^1(M)$ arbitrarily 
$C^1$-close to $f$ 
such that the continuations of $P_1$ and $P_2$ form a 
SH-simple cycle for $g$.
\end{prop}

The proof of Proposition~\ref{prop:SHsim} is 
given in Section~\ref{s:pert}.

\subsection{Main analytic result}
Let us state the main analytic result about the 
existence of the periodic points. We prepare one definition. 
For a hyperbolic periodic point $P$ with index 
$\mathbf{d}$, its \emph{center Lyapunov exponent},
denoted by $\lambda_{c}(P)$, 
is the real number given as follows:
\[
\lambda_{c}(P) 
:= \frac{1}{\pi} 
\log \|Df^{\pi}|_{E^c(P)}\|,
\] 
where $\pi$ denotes the period of $P$ and
$E^c(P)$ denotes the center direction at $P$. It is easy to see that periodic points in the same orbit shares the same Lyapunov exponents, thus sometimes we also say Lyapunov exponents of some orbit.  
Usually, the notion of Lyapunov exponents are defined 
for invariant measures. 
Notice that this definition coincides with the usual one 
if we consider the uniformly distributed Dirac measure 
along the orbit of $P$. Below, for a diffeomorphisms $f \in \mathrm{Diff}^1(M)$ and a point $x \in M$, we put 
$\mathrm{orb}(P) := \{ f^i(x) \mid i \in \mathbb{Z} \}$.
\begin{prop}\label{prop:ana}
Let $f \in \mathrm{Diff}^1(M)$ with a heterodimensional cycle
associated to hyperbolic fixed points $P_1$ and $P_2$. 
Suppose that they form a SH-simple heterodimensional cycle 
with respect to the coordinates $(U_i, \phi_i)$.
Then, 
there exists an integer $\tilde{l}$ such that the following holds: 
\begin{itemize}
\item For every $j \geq \tilde{l}$, there exists a periodic point $R_j$ 
of period $j$
and whose orbit admits a strongly 
partially hyperbolic splitting of index $\mathbf{d}$ such 
that the angles between $E^s(R_j), E^c(R_j)$ and 
$E^u(R_j)$ are bounded from below by some positive uniform constant 
independent of $j$. 
\item Let $\lambda_c(R_j)$ be the 
center Lyapunov exponent of $R_j$. 
Then we have $\lambda_c(R_j) \to 0$ as $j \to \infty$. 
\item For every $j \geq \tilde{l}$, the sequence of the orbits $\{\mathrm{orb}(R_k)\}_{k > j}$ 
does not accumulate to $\mathrm{orb}(R_j)$. In other words, for every $j \geq \tilde{l}$, there exists 
a neighbourhood $V_j$ of $\mathrm{orb}(R_j)$ such that 
$V_j \cap \mathrm{orb}(R_k) = \emptyset$ for every 
$k > j$.
\end{itemize}
\end{prop}

\subsection{Proof of Theorem~\ref{theo:main}}
Using Proposition~\ref{prop:SHsim} and Proposition~\ref{prop:ana}, 
let us complete the proof of Theorem~\ref{theo:main}.
For the proof, we prepare a lemma. 
This is used to perturb a periodic point 
having small Lyapunov exponent into one 
with zero Lyapunov exponent.
Notice that 
the availability of this lemma heavily depends 
on the flexibility of the $C^1$ topology. 

\begin{lemm}[Franks' Lemma, see Appendix A of \cite{BDV}]\label{l:franks}
Let $f \in \mathrm{Diff}^1(M)$, $\varepsilon > 0$ 
and $P$ be a hyperbolic periodic point of period $\pi$. 
Let $\{G_i:T_{f^i(P)}M \to T_{f^{i+1}(P)}M\}_{i=0,\ldots, \pi-1}$ be 
a sequence of linear maps such that 
$\|Df_{f^i(P)} -G_i\| < \varepsilon$ holds for every 
$i$. Then, given a neighbourhood $V$ 
of $\mathrm{orb}(P)$, 
there exists $g \in \mathrm{Diff}^1(M)$
such that the following holds: 
\begin{itemize}
\item $\mathrm{dist}(f, g) < \varepsilon$;
\item $g(x)=f(x)$ for every $x\in M\setminus V$;
\item $g$ preserves the orbit of $P$, that is, 
for every $i$ we have $g^i(P) = f^i(P)$;
\item $Dg_{g^i(P)} = G_i$.
\end{itemize}
\end{lemm}

Now, let us complete the proof.

\begin{proof}[Proof of Theorem~\ref{theo:main}]
Let $f \in \mathrm{Diff}^1(M)$
having a strongly partially hyperbolic heterodimensional cycles associated to 
$P_1$ and $P_2$ satisfying the assumptions (T1-4) of Theorem~\ref{theo:main}. 
Fix an arbitrarily small $\varepsilon_{\ast} >0$.

Let us apply Proposition~\ref{prop:SHsim} to this heterodimensional cycle.
Then we obtain a diffeomorphim $f_1$ with
a SH-simple heterodimensional 
cycle associated to the hyperbolic continuations 
of $P_1$ and $P_2$ for $f_1$. 
Notice that $f_1$ 
can be chosen arbitrarily $C^1$-close to $f$, in particular,
$\varepsilon_{\ast}/3$-close to $f$ in the $C^1$-distance.

Now we can apply Proposition~\ref{prop:ana}: For $f_1$ we know that
there exist $\tilde{l} \in \mathbb{N}$ and a sequence of periodic orbits 
$\{ R_j\}_{j \geq \tilde{l}}$
satisfying the conclusion of Proposition~\ref{prop:ana}. 
For each $j \geq \tilde{l}$, we take a neighbourhood $V_j$ of $\mathrm{orb}(R_j)$
in such a way that $V_j \cap V_{j'} = \emptyset$ holds for $j \neq j'$.

Since $\mathrm{orb}(R_j)$ admits a partially hyperbolic splitting and $\lambda_c(R_j) \to 0$ as $j \to \infty$, 
there exists 
$L > \tilde{l}$ such that for every $j \geq L$,
we can find 
an $\varepsilon_{\ast}/3$ $C^1$-small perturbation 
whose the support is contained in $V_j$
such that it preserves the 
orbit of $R_j$ 
and the resulted Lyapunov 
exponent of $R_j$ is equal to zero. 
Furthermore, we can assume that the 
the perturbations are arbitrarily small as $j \to \infty$.

Let us state this more precisely.
For every $j \geq L$, using Franks' Lemma 
we take a $C^1$ diffeomorphism 
$\rho_j\in\mathrm{Diff}^1(M)$ 
such that the following holds:
\begin{itemize}
\item $\mathrm{supp}(\rho_{j}) \subset V_j$ where we put $\mathrm{supp}(\rho_{j}) := \{ x \in M \mid 
\rho_{j}(x) \neq x\}$;
\item For every $k \geq 0$, $ (\rho_j \circ f_1)^k(R_j) = f_1^k(R_j)$.
In particular, $R_j$ is still a periodic point of period $j$ for $\rho_j\circ f_1$.
\item $\lambda_c(R_j, \rho_j \circ f_1) =0$, where $\lambda_c(R_j, \rho_j \circ f_1)$ 
denotes the center Lyapunov exponent 
of $R_j$ for $\rho_j \circ f_1$.
\item $\mathrm{dist}(\rho_j \circ f_1, f_1)
<\varepsilon_{\ast}/4$ and 
it converges to zero as $j \to \infty$.
\end{itemize}
The existence of such a sequence of diffeomorphisms 
can be confirmed by the fact that 
$\lambda_c(R_j, f_1) \to 0$ and the 
boundedness of the angles of partially hyperbolic 
splittings over $\{\mathrm{orb}(R_j)\}$. 
We define the sequence of diffeomorphisms
$\{g_j\}_{j \geq L}$ inductively as follows: 
\begin{itemize}
\item $g_{L} = \rho_{L} \circ f_1$.
\item $g_{j+1} = \rho_{j+1} \circ g_{j}$ 
for $j > L$.
\end{itemize}
Using the disjointness of the support of 
$\{\rho_j\}$, we can see that for every $k \geq 0$,
the sequence
$\mathrm{dist}(g_{j+k},g_j) $ converges to zero as 
$j \to \infty$, uniformly with respect to $k$.
Consequently, $\{g_{j}\}_{j\ge L}$ is a Cauchy sequence in $\mathrm{Diff}^1(M)$. By the completeness of $\mathrm{Diff}^1(M)$ (see \cite{Hi} for instance), the sequence 
$\{ g_j \}$ converges to a 
$C^1$ diffeomorphism in the $C^1$-distance.
Let $g_{\infty}$ be the limit diffeomorphism.
Notice that, again by the disjointness of $V_j$, 
for every $j$ we 
see that the orbit of $R_j$ is the same for $f_1$ 
and $g_{\infty}$, 
having zero center Lyapunov exponent. 
Furthermore, by the continuity of the distance function we have
$\mbox{dist}(f_1, g_{\infty}) \leq 
\varepsilon_{\ast}/4 < \varepsilon_{\ast}/3$.

Let us give the final perturbation to obtain the conclusion.
Since the orbits of $\{ R_j \}$ are the same for 
$g_{\infty}$ and $f_1$, still $\{V_j\}$ are pairwise disjoint 
neighbourhoods of $\mathrm{orb}(R_j)$. 
For each $j \geq L$, we take a diffeomorphism 
$\eta_j\in\mathrm{Diff}^1(M)$ such that
\begin{itemize}
\item $\mathrm{supp}(\eta_{j}) \subset V_j$.
\item $\eta_j \circ g_{\infty}$ has $j \cdot a_j$ 
distinct periodic points of period $j$ in $V_j$, where $a_j$ is some super exponentially divergent sequence 
(for instance set $a_j = j!$).
\item $\mathrm{dist}(\eta_j \circ g_{\infty}, g_{\infty}) < \varepsilon_{\ast}/4$ for every $j \geq L$ and 
converges to zero as $j \to \infty$. 
\end{itemize}
The existence of such $\{\eta_j\}$ can be deduced by using the 
nullity of the center Lyapunov exponent of $R_j$,
see for instance Remark~5.2 in \cite{AST} for the 
concrete construction of such perturbations. 

Then, put 
$h_n := \eta_n \circ \cdots \circ \eta_{L} \circ g_{\infty}$.
By the same reason as above, one can 
check that the limit $h_{\infty} := \lim_{n \to \infty} h_n$ exists and 
it is a $C^1$-diffeomorphism. Furthermore, one can see that $h_{\infty}$
has at least $j\cdot a_j$ periodic points of period $j$ for every $j \geq L$ and
$\mathrm{dist}(g_{\infty}, h_{\infty}) < \varepsilon_{\ast}/3$. 
Finally, we have 
\[
\mathrm{dist}(f, h_{\infty}) \leq \mathrm{dist}(f, f_1) +\mathrm{dist}(f_1, g_{\infty}) + \mathrm{dist}(g_{\infty}, h_{\infty}) < \varepsilon_{\ast}
\]and for every $r>1$, 
\[
\liminf_{n\to\infty}\dfrac{\#\mbox{Per}_n(h_{\infty})}{r^n}\ge \liminf_{n\to\infty}\dfrac{na_n}{r^n}=+\infty.
\]
Thus, the diffeomorphism 
$h_{\infty}$ satisfies the conclusion of Theorem~\ref{theo:main}.
\end{proof}


\section{Creation of strong heterodimensional cycles}
\label{s:creSH}
In this section, we prove Proposition~\ref{prop:pert}
and Theorem~\ref{theo:hclass}. 
In the proof, we use the following 
powerful perturbation lemma by Hayashi \cite{Ha} 
which allows us to create a cycle by connecting invariant
 manifolds of different saddles under 
 a small $C^1$ perturbation.

\begin{lemm}[Connecting Lemma]\label{lemm:cnt}
Let $a_f$ and $b_f$ be a pair of saddles of $f\in\mathrm{Diff}^1(M)$ such that thee are sequences of points $\{y_n\}$ and of natural numbers $\{k_n\}$ satisfying:
\begin{itemize}
\item $y_n\to y\in W^u(a_f)\ (n\to\infty), \ y\not= a_f$; and
\item $f^{k_n}(y_n)\to z\in W^s(b_f)\ (n\to\infty),\ z\not=b_f$.  
\end{itemize}
Then, there is a diffeomorphism $g$ arbitrarily $C^1$ close to $f$ such that $W^u(a_g)$ and $W^s(b_g)$ have a non-empty intersection arbitrarily close to $y$, where $a_g$ (resp. $b_g$) is the hyperbolic continuation of $a_f$ (resp. $b_f$) for $g$.
\end{lemm}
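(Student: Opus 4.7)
The plan is to adapt Hayashi's tube-perturbation strategy. Given $\varepsilon>0$, the goal is to construct $g\in\mathrm{Diff}^1(M)$ with $\mathrm{dist}(f,g)<\varepsilon$ such that $W^u(a_g)\cap W^s(b_g)$ contains a point arbitrarily close to $y$. I would first fix small Hartman--Grobman linearizing neighborhoods around $a_f$ and $b_f$; outside these, $f$ admits local charts in which the action on a bounded orbit segment is, up to controlled error, a translation. By hypothesis, for $n$ large the orbit segment $\sigma_n:=\{y_n, f(y_n),\ldots, f^{k_n}(y_n)\}$ starts within distance $o(1)$ of $W^u(a_f)$ and terminates within $o(1)$ of $W^s(b_f)$, so it already traces a pseudo-orbit from one invariant manifold to the other.

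The naive strategy would be to perform a single $C^1$-small bump perturbation near $z$ that shifts $f^{k_n}(y_n)$ exactly onto $W^s(b_g)$, together with one near $y$ that shifts $y_n$ onto $W^u(a_g)$. The obstruction is that even a tiny displacement introduced near the start can be amplified uncontrollably over the $k_n$ intervening iterates, unless the orbit is handled cell-by-cell. The core of Hayashi's argument is therefore a combinatorial \emph{selection lemma}, a finite-multiplicity refinement of Pugh's closing-lemma geometry: choose a finite cover of $M$ by small cubes and, among the times $0,1,\ldots,k_n$, extract a ``selected'' subsequence of indices so that within each cube the selected points of $\sigma_n$ have uniformly bounded multiplicity. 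I would then enclose the selected subsegment in a tube $T$ made of small, pairwise disjoint boxes around those selected points.

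Within each box of $T$ the perturbation is a cutoff function multiplied by a short constant vector field; because the supports are disjoint, the $C^1$-norm of the composite perturbation is controlled by the maximum of the individual norms rather than their sum, so each can be taken below $\varepsilon$. These per-box translations are calibrated so that, inductively along the selected times, a small disk in $W^u(a_g)$ is realigned with the tracking of $\sigma_n$, the terminal box (near $z$) being used to push the transported disk directly onto $W^s(b_g)$. The main obstacle is the selection step itself: establishing a multiplicity bound independent of $n$ requires a delicate pigeonhole/doubling argument over the covering balls, and one must make it compatible with the linearizing neighborhoods of $a_f$ and $b_f$ (where the dynamics is not close to a translation). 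Once the selection lemma is in place, verifying that the composite perturbation has $C^1$-distance less than $\varepsilon$ from $f$ and that $W^u(a_g)\cap W^s(b_g)$ acquires a point arbitrarily close to $y$ reduces to bump-function calculus and transversality arguments in the tube.
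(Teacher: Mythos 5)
The paper does not prove this lemma at all: it is quoted verbatim as Hayashi's Connecting Lemma and used as a black box, with the reference \cite{Ha} standing in for the proof. So there is no ``paper's proof'' to compare against; the only question is whether your sketch constitutes a proof on its own, and it does not.

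Your outline correctly names the ingredients of Hayashi's argument (a tube of disjoint boxes around a selected subsegment of the pseudo-orbit, bounded multiplicity so that the $C^1$-size of the composed perturbation is the maximum rather than the sum of the pieces), but the two steps that carry essentially all of the mathematical content are only announced, not carried out. First, the selection lemma with a multiplicity bound independent of $n$ is, as you yourself say, ``the main obstacle''; stating that it ``requires a delicate pigeonhole/doubling argument'' is not a proof of it, and this is precisely the part that occupies the bulk of Hayashi's Annals paper. Second, your description of the local perturbation is too strong: a $C^1$-small perturbation supported in a single small box can only displace points by an amount proportional to the size of the box times $\varepsilon$, so one cannot ``push the transported disk directly onto $W^s(b_g)$'' in one box. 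The correct statement is the uniform $(\varepsilon,N)$-perturbation lemma: for each $\varepsilon$ there is an integer $N=N(\varepsilon,f)$ such that two points that are sufficiently close in a suitable chart can be connected by an $\varepsilon$-perturbation supported in the union of $N$ consecutive iterates of a box, and the whole selection procedure must be organized around these $N$-iterate tiles rather than single boxes. Related unaddressed points: Hartman--Grobman gives only a $C^0$ conjugacy and is not the right tool near the saddles; and you must arrange the supports to avoid neighborhoods of $a_f$, $b_f$ and the local invariant manifolds so that the continuations $a_g$, $b_g$ and the germs of $W^u(a_g)$, $W^s(b_g)$ near $y$ and $z$ survive the perturbation. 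As it stands, your text is a roadmap to the literature, not a self-contained argument; for the purposes of this paper the honest move is to cite \cite{Ha} (or the streamlined proofs of Wen--Xia or Bonatti--Crovisier \cite{BC}) exactly as the authors do.
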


\subsection{Proof of Theorem~\ref{theo:hclass}}
\label{ss:thm3}
We begin with the proof of Theorem~\ref{theo:hclass}. 
Let us recall one general result on the transitivity of the 
systems:
\begin{lemm}[\cite{BG}, Page 32, Proposition 2.2.2.]
\label{lemm:bidense}
Let $X$ be a compact metric space without isolated point 
and $f:X \to X$ is a transitive homeomorphism. 
Put $\mathrm{orb}^{+}(x) := \{f^i(x) \mid i \geq 0\}$
and call it the forward orbit of $x$.
Then there is a residual subset $R \subset X$ such that 
for every $x \in R$, 
$\mathrm{orb}^{+}(x)$ is dense in $X$.
\end{lemm}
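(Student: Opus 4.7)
The plan is the classical Baire-category construction. Fix a countable basis $\{U_n\}_{n \ge 1}$ for the topology of $X$ (available since $X$, being compact metric, is second countable) and set
\[
W_n := \bigcup_{k \ge 0} f^{-k}(U_n),
\]
i.e.\ the set of points whose forward orbit meets $U_n$. Continuity of $f$ makes each $W_n$ open, so once every $W_n$ is shown to be dense, the Baire category theorem (applicable because a compact metric space is complete) gives that $R := \bigcap_n W_n$ is a dense $G_\delta$, hence residual. By construction, every $x \in R$ has $\mathrm{orb}^{+}(x)$ intersecting each basis element $U_n$, so $\mathrm{orb}^{+}(x)$ is dense in $X$.

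To establish density of $W_n$, fix any nonempty open $U \subset X$ and, by the transitivity assumption, pick $x_0$ with $\{f^i(x_0) : i \in \mathbb{Z}\}$ dense in $X$. I would then verify that the sets $I_U := \{i \in \mathbb{Z} : f^i(x_0) \in U\}$ and $I_{U_n} := \{i \in \mathbb{Z} : f^i(x_0) \in U_n\}$ are both infinite. If, say, $I_U$ were finite, then removing those finitely many orbit points from $U$ would leave a nonempty open subset of $X$ (nonempty because $U$ is infinite, which is exactly where the no-isolated-points hypothesis enters), and this open subset would be disjoint from the two-sided orbit, contradicting its density. Having shown both sets are infinite, I choose $i_1 \in I_U$ and $i_2 \in I_{U_n}$ with $i_2 > i_1$; then $k := i_2 - i_1 \ge 1$ and $f^{i_1}(x_0) \in U \cap f^{-k}(U_n) \subset U \cap W_n$, so $W_n \cap U \neq \emptyset$.

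The only genuinely subtle point is passing from a dense \emph{two-sided} orbit to a forward orbit that visits $U_n$ at arbitrarily large times, and this is precisely what the absence of isolated points supplies; everything else is a routine Baire category argument.
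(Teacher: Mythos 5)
Your Baire-category skeleton is the standard one and is fine as far as it goes: the sets $W_n=\bigcup_{k\ge 0}f^{-k}(U_n)$ are open, and everything reduces to their density. The genuine gap is in the final step of that density argument. The paper's notion of transitivity only supplies a point $x_0$ whose \emph{two-sided} orbit $\{f^i(x_0)\}_{i\in\mathbb{Z}}$ is dense, so $I_U$ and $I_{U_n}$ are subsets of $\mathbb{Z}$, and an infinite subset of $\mathbb{Z}$ need not be unbounded above. Nothing you prove excludes the configuration $I_{U_n}\subset(-\infty,0]$ and $I_U\subset[0,\infty)$, in which no pair $i_1\in I_U$, $i_2\in I_{U_n}$ with $i_2>i_1$ exists. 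This is not a vacuous worry: in the full shift on $\{0,1\}^{\mathbb{Z}}$ (compact metric, no isolated points) there are points with dense two-sided orbit whose forward orbit converges to a fixed point, so the visit set of a suitable cylinder is indeed infinite but bounded above. Your closing sentence names the right subtle point (``visits $U_n$ at arbitrarily large times''), but the argument you actually give yields only infinitude of $I_{U_n}$, not unboundedness above; for a dense \emph{forward} orbit your argument would be complete, since infinite subsets of $\mathbb{N}$ are automatically unbounded above.

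The gap can be closed with one more idea. Since $X$ is compact metric without isolated points, every nonempty open subset of $X$ is uncountable, and (as $x_0$ cannot be periodic) $\overline{\{f^i(x_0):i\le N\}}=\{f^i(x_0):i\le N\}\cup\alpha(x_0)$. Hence if some nonempty open $W$ has $I_W$ bounded above by $N$, then $W\subset\{f^i(x_0):i\le N\}\cup\alpha(x_0)$, and the open set $W\setminus\alpha(x_0)$, being contained in a countable set, is empty; so $W\subset\alpha(x_0)$. Then $\alpha(x_0)$ is a closed $f$-invariant set with nonempty interior, the dense orbit enters it, and invariance forces $X=\overline{\mathrm{orb}(x_0)}\subset\alpha(x_0)$; consequently every backward tail of the orbit is dense and every $I_V$ is unbounded below. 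Applying this with $W=U_n$ in the bad configuration makes $I_U$ unbounded below, contradicting $I_U$ being bounded below, so suitable $i_1<i_2$ always exist. (For comparison: the paper offers no proof of this lemma, citing Brin--Stuck, so the only issue is the internal completeness of your argument.)
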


Let us give the proof of Theorem~\ref{theo:hclass}.
Notice that almost the same argument appears 
for instance in \cite[Lemma~2.8]{ABCDW}.

\begin{proof}[Proof of Theorem~\ref{theo:hclass}]
Fix an arbitrarily small $\varepsilon >0$. 

First, we fix fundamental domains of 
$W^s(P_1)$ and $W^u(P_2)$ and
denote their closures by $K_1$ and $K_2$ respectively.
Notice that they are compact sets.  
Then, by the transitivity of $f$ on $H(P_1)$ and hyperbolicity 
near $P_1$ and $P_2$, we can choose the sequences of 
orbits $\{y_n\}$ and integers 
$\{k_n\}$ satisfying the assumption 
of the Connecting Lemma, letting $a_f = P_2$ and $b_f=P_1$. 
That is, first we choose 
a point $x \in H(P_1)$ whose forward 
orbit is dense in $H(P_1)$ (see Lemma~\ref{lemm:bidense},
notice that $H(P_1)$ has no isolated point, since it 
is non-trivial). 
Then, by using the hyperbolicity of $P_1$ and $P_2$, 
we can see that $\mathrm{orb}^{+}(x)$ has accumlating 
points in $K_1$ and $K_2$. Then, let $y$ be 
one of the accumlating point in $K_2$ and $z$ be 
one in $K_1$. Then the constructions of 
$\{y_n\}$ and $\{k_n\}$ are straightforward.

Now, by applying Hayashi's Connecting Lemma, 
we obtain an $\varepsilon/2$-small 
$C^1$-perturbation $g$ of $f$ such that $W^s(P^g_1)\cap W^u(P^g_2)\not=\emptyset$, 
where $P^g_i\ (i=1,2)$ denote 
the hyperbolic continuation of $P_i$ for $g$. 

Notice that the transversal intersection of $W^u(P^g_1)$ 
and $W^{ss}(P^g_2)$ is $C^1$-robust. 
Thus $P_1^g$ and $P_2^g$ form 
a heterodimensional cycle that satisfies 
the hypothesis (T1-4) of Theorem~\ref{theo:main} whose conclusion gives a $\varepsilon/2$-small $C^1$-perturbation $h$ of $g$ such that $h$ is super exponential divergent. Since 
$\mathrm{dist}(h,f)\le \mathrm{dist}(h,g)+\mathrm{dist}(g,f)<\varepsilon $ and $\varepsilon$ can be chosen arbitrarily small in advance, we obtain the conclusion of Theorem~\ref{theo:hclass}.
\end{proof}

\subsection{Proof of Proposition~\ref{prop:pert}}

Let us give the proof of Proposition~\ref{prop:pert}.
The proof is divided into two steps.

\begin{lemm}\label{lemm:1gene}
Let $M$ be a three dimensional closed 
manifold. Let $\mathcal{U}$ be an open set of 
$\mathrm{Diff}^1(M)$ such that 
every $f \in \mathcal{U}$ satisfies all the 
conditions in Theorem~\ref{theo:trans}.
Then, there is a set $\mathcal{V} \subset \mathcal{U}$
which is open and dense in $\mathcal{U}$ such that for every
$g \in \mathcal{V}$ either 
$W^u(P_1) \cap W^{ss}(P_2) \neq \emptyset$
or 
$W^{uu}(P_1) \cap W^s(P_2) \neq \emptyset$
holds.
\end{lemm}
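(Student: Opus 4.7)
We argue openness and density of $\mathcal{V}$ in $\mathcal{U}$ separately. Openness is immediate from dimension counting: $\dim W^u(P_1)+\dim W^{ss}(P_2)=2+1=\dim M$ and $\dim W^{uu}(P_1)+\dim W^s(P_2)=1+2=\dim M$, so after an arbitrarily small perturbation rendering an existing intersection transverse, each of the two conditions persists $C^1$-robustly, and so does their disjunction.

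For density, fix $f\in\mathcal{U}$ and $\varepsilon>0$. By robust transitivity $f$ is transitive, and since $M$ has no isolated point, Lemma~\ref{lemm:bidense} produces $x\in M$ with dense forward orbit. Choose $y\in W^u(P_1)\setminus\{P_1\}$ and $z\in W^{ss}(P_2)\setminus\{P_2\}$; density of the forward orbit of $x$ yields sequences $m_i<k_i$ with $f^{m_i}(x)\to y$ and $f^{k_i}(x)\to z$. With $y_i:=f^{m_i}(x)$ and $n_i:=k_i-m_i$, Hayashi's Connecting Lemma (Lemma~\ref{lemm:cnt}) applied with $a_f=P_1$ and $b_f=P_2$ delivers $g_0$ with $\mathrm{dist}(f,g_0)<\varepsilon/2$ and a point $q\in W^u(P_1^{g_0})\cap W^s(P_2^{g_0})$ close to $y$.

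The heart of the argument is upgrading this generic heteroclinic intersection to a strong one. Since the Connecting Lemma perturbation is supported in a thin tube around the pseudo-orbit from $y$ to $z$, there is an iterate $g_0^N(q)$ lying very near $z\in W^{ss}(P_2^{g_0})$. In a flow box $B$ around $z$ trivializing the strong stable foliation of $W^s(P_2^{g_0})$, the point $g_0^N(q)$ sits at a small but definite offset from the leaf $W^{ss}(P_2^{g_0})$ along the one-dimensional center direction. A one-parameter family of $C^1$-small perturbations supported in $B$ and tangent to $E^c$ slides $g_0^N(q)$ across $W^{ss}(P_2^{g_0})$, so at a suitable parameter value we obtain $g$ with $\mathrm{dist}(f,g)<\varepsilon$ and $W^u(P_1^g)\cap W^{ss}(P_2^g)\neq\emptyset$. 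The main obstacle is ensuring this final center-tangent perturbation does not destroy the branch of $W^u(P_1)$ through the orbit of $q$; this is handled by choosing $B$ to be an arbitrarily thin tube around the orbit arc of $q$, so that it meets $W^u(P_1^{g_0})$ only along that arc. If the local configuration obstructs the direct attempt, we fall back on the symmetric construction --- starting from $y\in W^{uu}(P_1)\setminus\{P_1\}$ and $z\in W^s(P_2)\setminus\{P_2\}$, invoking the Connecting Lemma, and then performing a center-tangent perturbation inside $W^u(P_1^{g_0})$ --- to produce instead $W^{uu}(P_1^g)\cap W^s(P_2^g)\neq\emptyset$.
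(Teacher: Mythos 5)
Your openness argument is fine (the set $\mathcal{V}$ should be taken to be the set where one of the two intersections exists \emph{transversally}; by the dimension count this is indeed a $C^1$-open condition), and the first half of your density argument --- using Lemma~\ref{lemm:bidense} and the Connecting Lemma to produce $W^u(P_1^{g_0})\cap W^s(P_2^{g_0})\neq\emptyset$ --- is the same standard step that appears in the paper's proof of Theorem~\ref{theo:hclass}. The gap is in what you call the heart of the argument: upgrading the heteroclinic point $q$ to a strong heteroclinic intersection. It is not established that some iterate $g_0^N(q)$ sits near a point of $W^{ss}(P_2^{g_0})$, in the \emph{local} component of $W^s(P_2^{g_0})$, at a center-offset that is small compared with the radius of a perturbation box which must avoid $P_2$, the remaining points of the orbit of $q$, and the pieces of invariant manifolds you need to keep. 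A point of $W^s(P_2)\setminus W^{ss}(P_2)$ approaches $W^{ss}(P_2)$ only at a rate comparable to its distance from $P_2$, so along the tail of the orbit the displacement required is always of the same order as the admissible support radius, and the ratio (achievable displacement)/(required displacement) never improves; the Connecting Lemma as stated only guarantees an intersection near $y$, not a controlled passage near your chosen $z\in W^{ss}(P_2)$. More fundamentally, the clause ``if the local configuration obstructs the direct attempt, we fall back on the symmetric construction'' carries the entire weight of the dichotomy in the statement: you give no criterion for when the direct attempt fails and no reason why the symmetric one must then succeed. Note also that your argument never uses the hypothesis that $Df$ preserves the orientations of the partially hyperbolic splitting; if a local perturbation of this kind sufficed, both the disjunction in the conclusion and the orientation assumption would be superfluous.

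The paper's proof is entirely different and is essentially a citation: by Theorem~1.3 of \cite{BDU}, a robustly transitive, strongly partially hyperbolic, orientation-preserving diffeomorphism of a closed $3$-manifold can be $C^1$-approximated by one for which either the strong stable or the strong unstable foliation is minimal. If, say, the strong stable foliation is minimal, the leaf $W^{ss}(P_2)$ is dense and therefore crosses a small disk of $W^u(P_1)$ (which is transverse to that foliation), giving $W^u(P_1)\cap W^{ss}(P_2)\neq\emptyset$; the other alternative gives $W^{uu}(P_1)\cap W^s(P_2)\neq\emptyset$. The ``either/or'' of the lemma is precisely the ``either/or'' of this global foliation dichotomy, and that is the ingredient missing from your proposal.
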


\begin{proof}
By the robust transitivity, and the 
preservation of the orientation, we know that 
we can approximate the diffeomorphism by one such that
either the strong 
stable foliation or the strong unstable foliation is minimal
in $M$ (i.e., every leaf is dense in $M$)
see \cite[Theorem~1.3]{BDU}. 
For such a diffeomorphism, we have either
$W^u(P_1) \cap W^{ss}(P_2) \neq \emptyset$
or 
$W^{uu}(P_1) \cap W^s(P_2) \neq \emptyset$.
Since this is an open condition, we obtain the conclusion.
\end{proof}

\begin{lemm}
Let $f \in \mathcal{V}$ in Lemma~\ref{lemm:1gene}.
Then, $f$ can be approximated by $g$ which is 
arbitrarily $C^1$ close to $f$ such that $g$ or $g^{-1}$ satisfies 
conditions (T1-4) in Theorem~\ref{theo:main}.
\end{lemm}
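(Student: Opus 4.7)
The plan is to verify (T1) and (T2) almost for free from the hypotheses, use Lemma~\ref{lemm:1gene} together with passage to $f^{-1}$ to supply (T3), and then invoke Hayashi's Connecting Lemma to create (T4).

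Condition (T1) is immediate from the hypotheses on $P_1, P_2$. For (T2), I will observe that since $Df$ preserves the orientation of the one-dimensional bundle $E^c$, the restriction $Df|_{E^c_{P_i}}$ is multiplication by a positive real scalar for $i=1,2$. At $P_1$ (with $u$-index $2$), the center direction $E^c_{P_1}$ lies inside the unstable subspace and, by the dominated splitting, carries an eigenvalue strictly smaller in modulus than the one in $E^u_{P_1}$; hence it is the weakest unstable eigenvalue, real positive, and of multiplicity one. The symmetric argument at $P_2$ identifies the center eigenvalue as the weakest stable one, again real, positive and simple. Thus (T2) is automatic, and because the dominated splitting and orientation preservation are $C^1$-open, it survives any sufficiently small $C^1$-perturbation.

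Next I reduce to the first alternative of Lemma~\ref{lemm:1gene}. If only the second alternative $W^{uu}(P_1) \cap W^s(P_2) \neq \emptyset$ holds for $f$, I pass to $f^{-1}$: inversion swaps the $u$-indices of $P_1$ and $P_2$ and interchanges strong stable with strong unstable manifolds, so after relabeling $\tilde P_1 := P_2$, $\tilde P_2 := P_1$ the condition becomes $W^u(\tilde P_1) \cap W^{ss}(\tilde P_2) \neq \emptyset$ for $f^{-1}$. So I may assume $W^u(P_1) \cap W^{ss}(P_2) \neq \emptyset$, i.e.\ (T3). A preliminary generic $C^1$-small perturbation renders this intersection transversal (permissible since $\dim W^u(P_1) + \dim W^{ss}(P_2) = 2 + 1 = 3 = \dim M$), turning (T3) into a $C^1$-open condition.

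To obtain (T4), I mirror the argument in the proof of Theorem~\ref{theo:hclass}: fix fundamental domains $K_1 \subset W^s(P_1)$ and $K_2 \subset W^u(P_2)$; by robust transitivity and Lemma~\ref{lemm:bidense} choose $x \in M$ with $\mathrm{orb}^{+}(x)$ dense in $M$; the hyperbolicity at $P_1, P_2$ then forces this orbit to accumulate on points $y \in K_2$ and $z \in K_1$, which produces sequences $\{y_n\}$ and $\{k_n\}$ satisfying the hypotheses of Lemma~\ref{lemm:cnt} with $a_f = P_2$ and $b_f = P_1$. Hayashi's Connecting Lemma then yields an arbitrarily $C^1$-small perturbation $g$ with $W^u(P_2^g) \cap W^s(P_1^g) \neq \emptyset$, i.e.\ (T4). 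The main obstacle is to preserve (T1)--(T3) while (T4) is being created; this is handled by observing that (T1), (T2), and the transversalized (T3) are all $C^1$-open conditions (the hyperbolic continuations $P_i^g$ and their eigenstructure vary continuously with $g$, the dominated splitting persists, and transverse intersections of complementary dimension are robust), so choosing the Connecting Lemma perturbation sufficiently small yields a $g$ (respectively $g^{-1}$ in the reduction case) satisfying all of (T1-4) simultaneously.
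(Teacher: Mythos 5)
Your proposal is correct and follows essentially the same route as the paper: reduce to the case $W^u(P_1)\cap W^{ss}(P_2)\neq\emptyset$ (the other case being handled by passing to $g^{-1}$), create (T4) via Hayashi's Connecting Lemma exactly as in the proof of Theorem~\ref{theo:hclass}, and note that (T1)--(T3) are $C^1$-robust so they survive the perturbation. You simply spell out details the paper leaves implicit, such as deriving (T2) from orientation preservation and the dominated splitting, and making the (T3) intersection transverse so that robustness is clear.
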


\begin{proof}
Let us take $f \in \mathcal{V}$. 
We assume $W^u(P_1) \cap W^{ss}(P_2) \neq \emptyset$.
The other case can be done by similarly. 
Since $f \in \mathcal{U}$, by Hayashi's connecting lemma 
we can perturb $f$ so that $W^s(P_1) \cap W^u(P_2) \neq \emptyset$ (see the argument in Section~\ref{ss:thm3} 
for the detail). Thus we can obtain (T4) by an arbitrarily small
perturbation. Since the other conditions (T1-3) 
are all $C^1$-robust, 
we have that $g$ satisfies all the conditions (T1-4). 
\end{proof}

\section{Perturbation to SH-simple cycles }\label{s:pert}
In this section, we prove Proposition~\ref{prop:SHsim}. 
The strategy of the proof is close to the proof of Proposition~3.5 of \cite{BD}, which is based on Lemma~3.2 of \cite{BDPR}. We remind the reader that Proposition~\ref{prop:SHsim} is stated for diffeomorphisms of closed manifold of dimension large than or equal to three.   

\begin{proof}[Proof of Proposition~\ref{prop:SHsim}]
Let $f$ be a $C^1$ diffeomorphism with two hyperbolic fixed points $P_1$ and $P_2$ that satisfy the assumptions (T1-4) in Theorem 3. We will construct an arbitrarily small $C^1$ perturbation $g$ of $f$ such that $g$ exhibits a SH-simple cycle associated to $P_1$ and $P_2$. In fact, such a perturbation will be obtained by finitely many steps and the $C^1$ size of the perturbation can be controlled arbitrarily small in each step. 
Let us fix an arbitrarily small $\varepsilon>0$.

\vbox{}
\underline{STEP 1} First,    
we  perturb $f$ in such a way that 
the perturbed diffeomorphism acts as 
an affine map in a small 
neighbourhood of the fixed points and the heteroclinic points. 
Namely, 
by using Franks' Lemma
(see Lemma~\ref{l:franks}) near $P_1, P_2$ and
the heteroclinic points, we take
a diffeomorphism $f_1$ with $\mathrm{dist}(f_1,f)<\dfrac{\varepsilon}{4}$
and local charts $(U_i,\phi_i)$ of $P_i\ (i=1,2)$ such that 
the following holds:
\begin{itemize}
\item $\phi_i(U_i)=\mathbb{D}_{r^i_s}^{d_s}\times \mathbb{D}_{r^i_c}^{1}\times\mathbb{D}_{r^i_u}^{d_u}\subset \mathbb{R}^{\boldsymbol{\mathrm{d}}}$;
\item $\phi_i(P_i)=(\boldsymbol{0}^{r_s},0,\boldsymbol{0}^{r_u})$;
\item $\phi_i\circ f_1\circ \phi_i^{-1}(x_s,x_c,x_u)=(Df|_{E^s(P_i)}(x_s), Df|_{E^c(P_i)}(x_c),Df|_{E^u(P_i)}(x_u))$ for every $(x_s,x_c,x_u)\in \phi_i(U_i)$,
where $E^c$ is the one-dimensional invariant subspace of $T_{P_1}M$ ($T_{P_2}M$) in which $Df(P_1)$ (resp. $Df(P_2)$) has weakest expanding (resp. weakest contracting) eigenvalue. Here, we remind the reader to recall the assumption (T2) in Theorem 3. $E^s(P_i)$ is the $d_s$ dimensional invariant subspace of $Df(P_i)$ associated to its first $d_s$ strongest contracting eigenvalues and $E^u(P_i)$ is the $d_u$ dimensional invariant subspace of $Df(P_i)$ associated to its first $d_u$ strongest expanding eigenvalues. 
\item There exist $\sigma_i\in\mathbb{N}$ and $Q_i\in W^u(P_i)\cap W^s(P_{i+1})$ such that  $Q_i\in U_i$, $f_1^k(Q_1)\notin U_1\cup U_2$ for $k=1,2,\cdots, \sigma_i-1$ and $f_1^{\sigma_i}(Q_i)\in U_{i+1}$, 
where we set $P_3=P_1$ and $(U_3, \phi_3)=(U_1,\phi_1)$. 
In the following we refer the integer $\sigma_i$ as the \emph{first enter time} of $Q_i$ into $U_{i+1}$; 
\item $\phi_2\circ f_1^{\sigma_1}\circ \phi_1^{-1}$ and $\phi_1\circ f_1^{\sigma_2}\circ \phi_2^{-1}$ are affine in a small neighbourhood of $Q_1$ and $Q_2$;
\item We also require the following: Let $t_1$ 
(resp. $t_2$) denote the 
center unstable (resp. stable) eigenvalue of 
$Df_1|_{P_1}$ (resp. $Df_2|_{P_2}$).
Then we have that $\log t_1$ and $\log t_2$ are 
rationally independent. 
\end{itemize}
Such a perturbation can be done similarly 
as in \cite[Lemma~3.2]{BDPR}. 

Inside $U_i$ for $i=1, 2$, 
there are locally invariant foliations which are parallel 
to the coordinate plane. We define as follows:
\begin{itemize}
\item $\mathcal{F}_i^s$ is the $d_s$-dimensional foliations 
on $U_i$ ($i=1, 2$)
with leaves parallel to $\mathbb{D}^{d_s}$ in the local 
coordinate, 
called the strong stable foliation;
\item Similarly, we define foliations 
$\mathcal{F}_i^u$, $\mathcal{F}_i^c$, 
$\mathcal{F}_i^{cs}$
and $\mathcal{F}_i^{cu}$ and call them the strong unstable,
center, center stable, center unstable foliations respectively. 
\end{itemize}

In the following, for $\ast\in\{s, u, c, cs, cu\}$,
by ${(\mathcal{F}^{\ast}_i)}_x$ we mean the leaf of the 
foliation $\mathcal{F}_i^{\ast}$ passing $x$ and 
by $W_{\mathrm{loc}}^{\ast}(P_i)$ we mean the connected 
component of $W^{\ast}(P_i) \cap U_i$ containing $P_i$. 
For instance, $W_{\mathrm{loc}}^u(P_1)$ denotes the 
connected component of 
$U_1\cap W^u(P_1)$ containing $P_1$.
Then, we have the following:
\begin{itemize}
\item inside $U_1$, 
we have $(\mathcal{F}_1^{cu})_{P_1}
=W_{\mathrm{loc}}^u(P_1)$ 
and $(\mathcal{F}_1^{s})_{P_1}=
W_{\mathrm{loc}}^s(P_1)$; 
\item inside $U_2$, we have 
$(\mathcal{F}_{2}^{cs})_{P_2}
=W_{\mathrm{loc}}^s(P_2)$ 
and $(\mathcal{F}_{2}^u)_{P_2}=W_{\mathrm{loc}}^u(P_2)$.
\end{itemize}

\underline{STEP 2} In this step, we construct 
a perturbation of $f_1$ such that the transition 
point $Q_1$ locates in the center foliation of $P_1$. First, by an arbitrarily small $C^1$ perturbation, we can always assume that $Q_1\notin (\mathcal{F}_{1}^u)_{P_1}$. 
By the domination of $E^c(P_1)\oplus E^u(P_1)$, 
we have
$$\dfrac{d_u(\phi_1(f_1^{-k}(Q_1)),\phi_1(f_1^{-(k+1)}(Q_1)))}{d_c(\phi_1(f_1^{-k}(Q_1)),\phi_1(f_1^{-(k+1))}(Q_1))}\to 0\quad (k\to +\infty),$$  
where $d_u(A,B)$ (resp. $d_c(A,B)$) denotes the distance 
between the points $A, B$ along the $\mathcal{F}^u$ (resp. $\mathcal{F}^c$) direction. 
We take $k\in \mathbb{N}$ sufficiently large such that  
$$\dfrac{d_u(\phi_1(f_1^{-k}(Q_1)),\phi_1(f_1^{-(k+1)}(Q_1)))}{d_c(\phi_1(f_1^{-k}(Q_1)),\phi_1(f_1^{-(k+1))}(Q_1))}<\dfrac{\varepsilon}{10M_0},$$ 
where $$M_0=\sup \{\|D(\phi_1 g \phi_1^{-1})(\boldsymbol{p})\|+\|D(\phi_1 g^{-1} \phi_1^{-1})(\boldsymbol{p})\|:\ \boldsymbol{p}\in \phi_1(U_1),\ \mathrm{dist}(g,f_1)<1 \}<+\infty.$$
Then, we take a diffeomorphism, 
denoted by $\alpha$, such that 
\begin{itemize}
\item $\mathrm{dist}(\alpha, \mathrm{id})<\dfrac{\varepsilon}{8M_0}$;
\item $\alpha$ coincides with identity outside a small neighbourhood $U$ of $f_1^{-k}(Q_1)$. Here, $U$ can be taken so small that $U\cap\mathrm{orb}(Q_1)=f_1^{-k}(Q_1)$; 
\item $\alpha\circ f^{-k}(Q_1)\in (\mathcal{F}^c_{1})_{P_1}$.
\end{itemize}
Thus, $f_2=\alpha \circ f_1$ is a perturbation of $f_1$ with 
$\mathrm{dist}(f_2,f_1)<\dfrac{\varepsilon}{4}$ 
satisfying $f_2^{-k}(Q_1)\in (\mathcal{F}^c_{1})_{P_1}$.
Notice that the forward iterations of $Q_1$ are not 
affected by the above perturbation and the 
heterodimensional cycle associated to $P_1$ and $P_2$ also
survives. 
By shrinking $U_1$ and replacing $Q_1$ by some 
backward iteration of it (still denoted by $Q_1$ for 
notational simplicity), we also get a new first enter 
time of $Q_1$ (still denoted by $\sigma_1$) such that 
$Q_1\in U_1\cap (\mathcal{F}^c_{1})_{P_1}$, 
$f_2^{\sigma_1}(Q_1)\in 
(\mathcal{F}_2)^s_{P_2}$ and 
$f_2^{j}(Q_1)\notin U_1\cup U_2$ 
for $j=1,2,\cdots ,\sigma_1-1.$ 
Finally, we fix a small 
neighbourhood $K_1\subset U_1$ of 
$Q_1$ such that $\phi_1(K_1)$ 
is a polydisk. 

\vbox{}
 \underline{STEP 3} Our goal in this step is to get another small perturbation of $f_2$ which keeps the foliations invariant under the transition maps. 
 First, we consider the perturbation around $Q_1$. 
 Without loss of generality, we can assume that $f_2^{\sigma_1}((\mathcal{F}^{cu}_1)_{Q_1})$ 
 is in the general position with respect to 
 $(\mathcal{F}_2^{cu})_{f_2^{\sigma_1}(Q_1)}$. 
 
 By the existence of the domination for 
 $E^s\oplus (E^{c} \oplus E^{u})$, 
 the forward image of 
 $f_2^{\sigma_1}((\mathcal{F}_1^{cu})_{P_1})$ 
under $f_2$ tends to 
$(\mathcal{F}_2^{cu})$. 
Thus, 
by replacing $\sigma_1$ by $\sigma_1+k$ for some large $k$, we can make a small perturbation $f_3$ of $f_2$ 
(again by Franks' lemma)
which keeps the foliation $\mathcal{F}_1^{cu}$ 
invariant under $f^{\sigma_1}_3$ on a smaller $K_1$.
 
 Now we consider the invariance of $\mathcal{F}_{\ast}^s$.
 For the strong stable foliation, 
 we can assume that 
 $f_3^{-\sigma_1}((\mathcal{F}_{2}^s)_{f^{\sigma_1}_3(Q_1)})$ is in a general position with respect to 
 $(\mathcal{F}_1^s)_{Q_1}$. 
 Consider the backward iterations of $f_3^{-\sigma_1}((\mathcal{F}_{2}^s)_{f^{\sigma_1}_3(Q_1)})$, 
which tends to $(\mathcal{F}_1^s)$. 
Replacing $Q_1$ by $f_3^{-k}(Q_1)$ for some large $k$, 
we can take a small perturbation $f_4$ of $f_3$, 
such that the foliation $\mathcal{F}_1^{s}$ 
is invariant under $f^{\sigma_1}_3$ on a smaller $K_1$,
preserving the invariance of center unstable foliations 
$\mathcal{F}_{\ast}^{cu}$ in $K_1$.
 
Repeating the above argument to $\mathcal{F}_{\ast}^{cs}$ and $\mathcal{F}_{\ast}^u$, we obtain small perturbation 
$f_5$ of $f_4$ that $f_5^{\sigma_1}|_{K_1}$ preserves 
the foliations $\mathcal{F}_{\ast}^{cs}$ and $\mathcal{F}_{\ast}^{u}$, 
in addition to $\mathcal{F}_{\ast}^{cu}$ and $\mathcal{F}_{\ast}^{s}$. Then, the preservation of 
$\mathcal{F}_{\ast}^{cs}$ and $\mathcal{F}_{\ast}^{cu}$
implies the preservation of $\mathcal{F}_{\ast}^{c}$. 
Accordingly, we have seen the preservation of all the 
five foliations under $f_5^{\sigma_1}|_{K_1}$.

Completely in a similar way, by an arbitrarily small $C^1$ perturbation, $f_5^{\sigma_2}|_{K_2}$ also preserves these foliations. Each perturbation in this STEP 3 can be made arbitrarily small in the $C^1$ distance, thus we can have $\mathrm{dist}(f_5,f_2)<\dfrac{\varepsilon}{4}.$

\vbox{}
\underline{STEP 4} In this last step, we are going to give the final perturbation $f_6$ of $f_5$ such that 
the transition map 
$f^{\sigma_i}_6|_{K_i}$, 
restricted to the center direction, 
is an isometry (i.e., the has multiplication factor equals to 1). 

Since $\mathcal{F}_{\ast}^c$ is invariant under $f_5$, 
we only need to consider the restriction of 
$f_5^{\sigma_1}$ to $\mathcal{F}_{\ast}^c$,
which has the following form:
$$\ q_1+Y\mapsto bY\quad (b\in \mathbb{R}).$$
For $m,n\in \mathbb{N}$, let us consider $f_5^{-n}(Q_1)$ and $f_5^{\sigma_1+m}(Q_1)$ instead of $Q_1$ and $f_5^{\sigma_1}(Q_1)$ respectively. Since $f_5$ acts as a linear map $Df_5(P_i)$ in $U_i\ (i=1,2)$, we obtain that the restriction of $Df_5^{n+\sigma_1+m}(f^{-n}(Q_1))$ to $\mathcal{F}_{\ast}^c$ is of the following form:
\[\ t_1^{-n}q_1+Y\mapsto t_1^nt_2^mbY,\]
where $t_1\in (0,1)$ and $t_2>1$ are the center multipliers of $Df_5(P_1)$ and $Df_5(P_2)$ respectively. We remind the reader that $f_5$ act as a linear map $Df_5(P_i)$ inside $U_i$ by STEP 1 of our perturbation and 
recall that $\log t_1$ and $\log t_2$ are 
rationally independent. 
Thus we are allowed to choose $n$ and $m$ 
sufficiently large such that
$$|t_1^nt_2^mb-1|<\dfrac{\varepsilon}{4}.$$ 
Consider a linear perturbation $A$ 
of $Df_5(f_5^{\sigma_1+m-1}(Q_1))$ satisfying
\begin{itemize}
\item $A=\mbox{id}$ restricted in $E^s\oplus E^u$ direction;
\item $A=(t_1^nt_2^mb)^{-1}$ restricted in $E^c$ direction;
\item $\|A- \mbox{id}\|<\dfrac{\varepsilon}{4}$.
\end{itemize}
Applying Franks' Lemma to $f_5$ at $f_5^{\sigma_1+m-1}(Q_1)$, we get a $C^1$ perturbation $f_6$ of $f_5$ with $\mathrm{dist}(f_6,f_5)<\dfrac{\varepsilon}{4}$ such that $$Df_6^{\sigma_1+n+m}(f_5^{-n}(Q_1))=A\circ Df_5^{\sigma_1+n+m}(f_5^{-n}(Q_1)).$$
By our construction, the center multiplier of $f_6^{\sigma_1+n+m}(Q_1)$ 
equals to one. Let us rewrite $f_5^{-n}(Q_1)$ by $Q_1$, $\sigma_1+n+m$ by $\sigma_1$ and $f_6$ by $g$, shrink $U_1$ and $U_2$, take small neighbourhood $K_1\subset U_1$ of $Q_1$  such that $\sigma_1$ is the first enter time of $Q_1$ into $U_2$. In a similar way, we give another arbitrarily small $C^1$ perturbation to make the center multiplier of $f^{\sigma_2}(Q_2)$ equal to one. It is easy to verify according to Definition 2.4 that $g$ has a SH-simple cycle associated to $P_1$ and $P_2$. Moreover, 
we have $\mathrm{dist}(g,f)<\varepsilon$, since $\varepsilon$ is taken arbitrarily small in advance, the size of the perturbation can be made arbitrarily small. This completes the proof of Proposition~\ref{prop:SHsim}.    
\end{proof}


\section{Proof of analytic result}

In this section, we prove Proposition~\ref{prop:ana}.
This is an analytic result and the proof is done by purely 
analytic argument.

\subsection{Setting}
In this section, we introduce the maps in the 
local coordinates and gives formal calculations of the 
coordinates of the periodic points around the heterodimensional 
cycle. 

Let us consider a diffeomorphism $f$ 
having an SH-simple cycle between 
the fixed points $P_1$ and $P_2$ with local coordinates 
$(U_i, \phi_i)$ ($i=1, 2$), see Figure.~\ref{fig:SH}.
By definition, we know that for $i=1,2$,
\[
\phi_i(U_i)
=\mathbb{D}_{r_s^i}^{d_s} 
\times \mathbb{D}_{r_c^i}^{1}
\times \mathbb{D}_{r_u^i}^{d_u}.
\]
We put $F_i := \phi_i\circ f \circ \phi_i^{-1}$.
There are linear maps 
$\Lambda_i:  \mathbb{R}^{d_s} \to \mathbb{R}^{d_s}$ and
$M_i:  \mathbb{R}^{d_u} \to \mathbb{R}^{d_u}$
($i=1, 2$), 
$\mu: \mathbb{R} \to \mathbb{R} $, and
$\lambda:  \mathbb{R} \to \mathbb{R}$,
which describe the local dynamics around $P_i$.
In the following, 
we identify $\mu, \lambda$ with a real number which 
gives the multiplications under these linear maps.

Namely, for $i=1$,
for every $(x_s, x_c, x_u) \in \phi_1(U_1)$, 
if $\mu (x_c) \in \mathbb{D}_{r^c_1}^{1}$
and
$M_1 (x_u) \in \mathbb{D}_{r^u_1}^{d_u}$
then we have 
\[
F_1(x_s, x_c, x_u) = (\Lambda_1(x_s), \mu(x_c), M_1(x_u)).
\]
Also, for $i=2$,
for every $(x_s, x_c, x_u) \in \phi_2(U_2)$, 
if $M_2 (x_u) \in \mathbb{D}_{r^u_2}^{d_u}$
then we have 
\[
F_2(x_s, x_c, x_u) = (\Lambda_2(x_s), \lambda(x_c), M_2(x_u)).
\] 

Let us recall the local dynamics around the transition region. 
Let $K_i \subset U_i$ 
be the transition region from $U_i$ to $U_{i+1}$
(we put $U_3=U_1$).
Then $\phi_1(K_1)
= (\boldsymbol{0}^{d_s}, q_{1}, \boldsymbol{0}^{d_u}) + 
\mathbb{D}_{\kappa_s^1}^{d_s} 
\times \mathbb{D}_{\kappa_c^1}^{1}
\times \mathbb{D}_{\kappa_u^1}^{d_u}$.
For $K_2 \subset U_2$, we have
$\phi_2(K_2) 
= (\boldsymbol{0}^{d_s}, 0, \boldsymbol{q_{2}}) + 
\mathbb{D}_{\kappa_s^2}^{d_s} 
\times \mathbb{D}_{\kappa_c^2}^{1}
\times \mathbb{D}_{\kappa_u^2}^{d_u}.$

Let $\sigma_i$ be the transition time from 
$U_i$ to $U_{i+1}$. We put 
$\tilde{F}_i = 
\phi_{i+1}\circ f^{\sigma_i} \circ \phi_i^{-1}$.
Then we have
\[
\tilde{F}_1(\boldsymbol{0}^{d_s}, q_{1}, \boldsymbol{0}^{d_u})
= ( \boldsymbol{q}'_{1}, 0, \boldsymbol{0}^{d_u}), \quad
\tilde{F}_2(\boldsymbol{0}^{d_s}, 0, \boldsymbol{q}_{2})
= (\boldsymbol{q}'_{2}, 0, \boldsymbol{0}^{d_u}).
\]

Again, there are linear maps 
$\tilde{\Lambda}_i:  \mathbb{R}^{d_s} \to \mathbb{R}^{d_s}$ and
$\tilde{M}_i:  \mathbb{R}^{d_u} \to \mathbb{R}^{d_u}$
($i=1, 2$), 
which describes the local dynamics of the transition maps.
That is, for $i=1$,
for every 
$(X, q_{1}+Y, Z)\in K_1$ we have 
(recall the definition of SH-simple cycles, notice that in the $Y$ direction the transition map 
has multiplier $1$)
\[
\tilde{F}_1(X, q_{1}+Y, Z)
= (\boldsymbol{q}'_{1}+ \tilde{\Lambda}_1(X), Y, \tilde{M}_1(Z)).
\]
Similarly, for $i=2$,
for every 
$(X, Y, \boldsymbol{q_{2}}+Z)\in K_2$ we have 
\[
\tilde{F}_2(X, Y, \boldsymbol{q}_{2}+Z)
= (\boldsymbol{q}'_2+\tilde{\Lambda}_2(X), 
Y, \tilde{M}_2(Z)).
\]

\subsection{Formal calculation}
Given a SH-simple cycle, we are interested in finding periodic 
points which turn around it. 
Let us assume that there exists 
a periodic point $R \in K_1$ 
which has the following itinerary:
\begin{itemize}
\item $f^{\sigma_1}(R) \in U_2$;
\item there exists a positive integer $m_2 $ such that
$f^{\sigma_1+k}(R) $ is in the linearized region of $U_2$
for $k=0,\ldots, m_2-1$;
\item $f^{\sigma_1+m_2}(R) \in K_2$ and $f^{\sigma_1+m_2+\sigma_2}(R) \in U_1$;
\item there exists a positive integer $m_1$ such that
$f^{\sigma_1+m_2+\sigma_2+k}(R)$ is 
contained in the linearized region of $U_1$ for 
$k=0,\ldots, m_1-1$;
\item $f^{\sigma_1+m_2+\sigma_2+m_1}(R) =R$.
\end{itemize}
In this subsection, we investigate the condition 
of $R$ in the local coordinates.

Put  
\begin{align}\label{eq:defi}
\phi_1(R) = (X, q_1+Y, Z).
\end{align}
Then, 
$f^{\sigma_1}(R) \in U_2$ has the following form
in the $(U_2, \phi_2)$ coordinates:
\begin{align*}
(\boldsymbol{q}'_{1}+ \tilde{\Lambda}_1(X), Y, \tilde{M}_1(Z)).
\end{align*}
The point $f^{\sigma_1+m_2}(R)$ spends $m_2$ times
in $U_2$. 
As a result, in the $(U_2, \phi_2)$ coordinates, this point has 
the following coordinates:
\[
( \Lambda_2^{m_2}[\boldsymbol{q}'_1+\tilde{\Lambda}_1(X)], 
\lambda^{m_2}Y, M_2^{m_2}\tilde{M}_1(Z))
= 
( \Lambda_2^{m_2}[\boldsymbol{q}'_1+\tilde{\Lambda}_1(X)], 
\lambda^{m_2}Y, 
[M_2^{m_2}\tilde{M}_1(Z) -\boldsymbol{q}_2] +\boldsymbol{q}_2).
\]
Then, under $f^{\sigma_2}$, this point is mapped to 
$f^{\sigma_1+m_2+\sigma_2}(R)$. 
The local coordinates of this point with respect to 
$(U_1, \phi_1)$ is
\[
( \boldsymbol{q}'_2+\tilde{\Lambda}_2\Lambda_2^{m_2}\tilde{\Lambda}_1[\boldsymbol{q}'_1+\tilde{\Lambda}_1 (X)] , 
\lambda^{m_2}Y, 
\tilde{M}_2[M_2^{m_2}\tilde{M}_1(Z)-\boldsymbol{q}_2]).
\]
Then, under $f^{m_1}$, this point is mapped back to $R$.
The local coordinate is equal to
\[
\bigg( \Lambda_1^{m_1}(\boldsymbol{q}'_2+\tilde{\Lambda}_2\Lambda_2^{m_2}\tilde{\Lambda}_1[\boldsymbol{q}'_1+\tilde{\Lambda}_1 (X)]), 
{\mu}^{m_1}\lambda^{m_2}Y, 
M_1^{m_1}\tilde{M}_2[M_2^{m_2}\tilde{M}_1(Z)-\boldsymbol{q}_2] \bigg).
\]
Since this point is equal to the point in (\ref{eq:defi}),
we have equations for $(X, Y, Z)$. Formally, the solution is
\begin{align}
X&=[I -\Lambda_1^{m_1}\tilde{\Lambda}_2\Lambda_2^{m_2}\tilde{\Lambda}^2_1]^{-1}\cdot(
\Lambda_1^{m_1}\tilde{\Lambda}_2\Lambda_2^{m_2}\tilde{\Lambda}_1\boldsymbol{q}'_1 + {\Lambda_1}^{m_1} \boldsymbol{q}'_2), \label{eq:solx}\\
Y&=\frac{q_1}{\mu^{m_1}\lambda^{m_2}-1}, \label{eq:soly} \\
Z&=[M_1^{m_1}\tilde{M}_2M_2^{m_2}\tilde{M}_1-I ]^{-1}
 M_1^{m_1}\tilde{M}_2 \boldsymbol{q}_2 \nonumber \\
&=\big(M_2^{m_2}\tilde{M}_1-\tilde{M}_2^{-1}M_1^{-m_1}\big)^{-1} \boldsymbol{q}_2, \label{eq:solz}
\end{align}
where $I$ denotes the identity map. 
This formal solution may give a true periodic point 
of period $(\sigma_1 +m_2+\sigma_2 + m_1)$ 
depending on the choice of $m_2$ and $m_1$. 
In the following, we consider for what choice of 
$m_2$ and $m_1$ we can obtain the true orbit.

\subsection{Realizability of the orbit}
In order to check that the formal solution obtained 
in the previous subsection gives a true solution, 
we need to confirm that the point indeed passes the 
transition region at the designated moments. 
The following proposition states that
we can judge it by looking the
behavior in the center direction.

\begin{prop}\label{solu}
There exists $M>0$ such that 
for every $(m_1, m_2)$ satisfying $m_1, m_2 \geq M$ 
the following holds:
Suppose that $\mu^{m_1}\lambda^{m_2} \neq 1$ 
and we have the following inequality:
\begin{align}\label{eq:exis}
\left|\frac{q_1}{\mu^{m_1}\lambda^{m_2}-1}\right| \leq \kappa^1_c,
\end{align}
Then the orbit of the point $\bar{R} \in U_1$ given by
 $\phi_1(\bar{R}) = (X, q_1+Y, Z)$, where $X, Y, Z$ are the 
formal solutions (\ref{eq:solx}, \ref{eq:soly}, \ref{eq:solz}),
gives a true periodic orbit. 
\end{prop}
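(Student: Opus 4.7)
The plan is to verify, under the stated hypotheses, each of the five itinerary conditions defining a genuine periodic orbit: namely that the point $R$ with $\phi_1(R)=(X,q_1+Y,Z)$ lies in $K_1$, that its first $m_2$ forward iterates after leaving $K_1$ stay in the linearized region of $U_2$, that $f^{\sigma_1+m_2}(R)$ lies in $K_2$, that the next $m_1$ iterates after leaving $K_2$ stay in the linearized region of $U_1$, and finally that the orbit closes up. The formal computation already guarantees closing up, so the content of the proposition is the containment statements.

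The hypothesis $|Y|\le\kappa_c^1$ handles the center coordinate of $R$ directly. For the stable and unstable coordinates I would use hyperbolicity. Fix $\rho<1$ bounding the operator norms of $\Lambda_1,\Lambda_2,M_1^{-1},M_2^{-1}$. In (\ref{eq:solx}) the inverse $[I-\Lambda_1^{m_1}\tilde{\Lambda}_2\Lambda_2^{m_2}\tilde{\Lambda}_1^{2}]^{-1}$ is uniformly close to $I$ once $m_1\ge M$, while the right-hand factor is $O(\rho^{m_1})$; hence $\|X\|=O(\rho^{m_1})\le\kappa_s^1$. Rewriting (\ref{eq:solz}) as
\[
M_2^{m_2}\tilde{M}_1 Z-\boldsymbol{q}_2=\tilde{M}_2^{-1}M_1^{-m_1}Z,
\]
the factor on the right is $O(\rho^{m_1})\|Z\|$, so after a Neumann-series argument one obtains both $\|Z\|=O(\rho^{m_2})\le\kappa_u^1$ (giving $R\in K_1$) and the key bound $\|M_2^{m_2}\tilde{M}_1 Z-\boldsymbol{q}_2\|=O(\rho^{m_1})\le\kappa_u^2$, which is exactly the condition that $f^{\sigma_1+m_2}(R)$ belongs to $K_2$.

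For the intermediate iterates in $U_2$, the stable component of the $k$-th iterate is $\Lambda_2^{k}[\boldsymbol{q}'_1+\tilde{\Lambda}_1(X)]$, which contracts and therefore stays bounded; the unstable component $M_2^{k}\tilde{M}_1(Z)$ starts small and grows monotonically to a value near $\boldsymbol{q}_2\in\mathbb{D}^{d_u}_{r_u^2}$, so it remains inside $\mathbb{D}^{d_u}_{r_u^2}$ for $0\le k\le m_2$; and the center component $\lambda^{k}Y$ is bounded by $|Y|\le\kappa_c^1$, which we may require from the outset to be $\le r_c^2$. Symmetrically, for the $m_1$ iterates in $U_1$ the stable/unstable directions are controlled by contraction/expansion combined with the endpoint bounds already established, while the center component equals $\mu^{k}\lambda^{m_2}Y$, interpolating monotonically between $\lambda^{m_2}Y$ (small) and $\mu^{m_1}\lambda^{m_2}Y=q_1+Y$, so every intermediate value lies in $\mathbb{D}^{1}_{r_c^1}$.

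The hard part is purely organizational: one must choose the single threshold $M$ so that all of the above estimates hold simultaneously, tracking the interplay between the transition maps $\tilde{\Lambda}_i,\tilde{M}_i$ (only bounded) and the multiplicative factors $\Lambda_i^{m_i},M_i^{\pm m_i}$ (exponentially small). The essential point that makes the proposition non-trivial is that, while $\|X\|,\|Z\|$ and all intermediate components can be absorbed into the tail by enlarging $M$, the center coordinate $Y=q_1/(\mu^{m_1}\lambda^{m_2}-1)$ is governed by the resonance between $\mu$ and $\lambda$ and cannot in general be made small; thus the inequality (\ref{eq:exis}) is genuinely the only obstruction, and the remainder of the argument is a uniform continuity/hyperbolicity check.
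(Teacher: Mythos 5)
Your proposal is correct and follows essentially the same route as the paper: the paper likewise shows $X\to\boldsymbol{0}^s$ and $Z\to\boldsymbol{0}^u$ by hyperbolicity of $\Lambda_i^{m_i}$ and $M_i^{-m_i}$, uses the identity $M_2^{m_2}\tilde{M}_1Z-\boldsymbol{q}_2=(M_1^{m_1}\tilde{M}_2)^{-1}Z$ to place $f^{\sigma_1+m_2}(\bar{R})$ in $K_2$, and isolates (\ref{eq:exis}) as the only genuine constraint, coming from the center direction. You are in fact somewhat more thorough than the paper, which leaves the containment of the intermediate iterates in the linearized regions implicit, whereas you verify it via monotone contraction/expansion (modulo the standard choice of an adapted norm).
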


\begin{proof}
First, we can see that the point
$(X, q_1+Y, Z)$ is in the transition region 
$K_1$ if 
both $m_1$ and $m_2$ are sufficiently large. 
Indeed, first $X \to \boldsymbol{0}^s$ as 
$m_1, m_2 \to +\infty$. This is because 
the linear map $\Lambda_1^{m_1}\tilde{\Lambda}_2\Lambda_2^{m_2}\tilde{\Lambda}_1$
goes to zero map and the point $\Lambda_1^{m_1}\tilde{\Lambda}_2\Lambda_2^{m_2}\tilde{\Lambda}_1\boldsymbol{q}'_1 + {\Lambda_1}^{m_1} \boldsymbol{q}'_2$
goes to $\boldsymbol{0}^s$.
Similarly, by \eqref{eq:solz} we have $Z \to \boldsymbol{0}^u$ as 
$m_1, m_2 \to +\infty$.
The inequality \eqref{eq:exis} we assume guarantees that the $Y$-coordinate
of $\phi_1(\bar{R})$ lies the region of $\phi_1(K_1)$. 
Thus the point $\bar{R}$ is indeed in $K_1$
for sufficiently large $m_1$ and $m_2$.
 
\bigskip

Let us confirm that $f^{m_1+\sigma_1}(\bar{R})$ is in $K_2$.
The condition for $X$, $Y$-coordinates are obvious
for larger $m_1$ and $m_2$.
So let us examine the condition 
of the $Z$-coordinate.

By the definition of $Z$ in (\ref{eq:solz}), 
it satisfies 
\[
Z =M_1^{m_1}\tilde{M}_2 \big(M_2^{m_2}\tilde{M}_1Z -\boldsymbol{q}_2\big).
\]
As we have observed, $Z$ is very close to $\boldsymbol{0^u}$ when $m_1, m_2$
are large. Thus $M_2^{m_2}\tilde{M}_1Z -q_2$ must 
be close to zero since it is equal to 
$ (M_1^{m_1}\tilde{M}_2)^{-1}Z$,
where $(M_1^{m_1}\tilde{M}_2)^{-1}$ are 
strongly contracting linear map for larger $m_1$. 
This means that
$ (M_1^{m_1}\tilde{M}_2)^{-1}Z$, 
which is 
the $Z$-coordinate of 
$f^{m_1+\sigma_1}(\bar{R})$ in the local coordinates,
converges to $\boldsymbol{q}_2$ as $m_1, m_2 \to \infty$. 

Thus, we have seen that for $m_1$, $m_2$ large, the 
itinerary of $\bar{R}$ certainly passes the transition 
regions with the given itinerary. This completes the proof.
\end{proof}

\begin{rema}
This proof shows that there is no restriction of the 
orientation of strong stable/unstable eigenvalues of the 
fixed points.
\end{rema}

\subsection{Proof of Proposition~\ref{prop:ana}}
In this subsection we will complete the proof of 
Proposition~\ref{prop:ana} by examining the 
inequality (\ref{eq:exis}). 

By the argument of the previous subsections, 
we know that for $(m_1, m_2)$ sufficiently large
there exists a periodic point of period $\sigma_1 +m_2+
\sigma_2+ m_1$ if and only if it satisfies the 
inequality (\ref{eq:exis}). 
We shall show that for every $l\in\mathbb{N}$, 
there are integers $(m_{1,l}, m_{2,l})$ such that there 
is a periodic point $R_l$ of period 
$\sigma_1 +m_{2,l}+\sigma_2+ m_{1,l}:=\pi(R_l)$, satisfying $\pi(R_{l+1})=\pi(R_l)+1.$ 
whose central Lyapunov exponent 
$\lambda_{c}(R_l)$ converges to zero as $l \to \infty$.

First, by a direct calculation, 
we can get a sufficient condition 
for the inequality (\ref{eq:exis}):
\begin{lemm}\label{alpha}
For fixed $q_1$ and $\kappa_c$, under the 
condition $\lambda^{m_1}\mu^{m_2} > 1$, 
We have the inequality (\ref{eq:exis}) if  
$\lambda^{m_1}\mu^{m_2} > \tilde{\alpha}$, where
\[
\tilde{\alpha}:= \frac{|q_1|}{\kappa^c_1} +1,
\]
Notice that $\tilde{\alpha} > 1$.
\end{lemm}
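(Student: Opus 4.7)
My plan is a direct algebraic rearrangement of inequality (\ref{eq:exis}). I read the condition $\lambda^{m_1}\mu^{m_2}>1$ in the statement as a typographical variant of $\mu^{m_1}\lambda^{m_2}>1$, matching the product appearing in the formal solution (\ref{eq:soly}) and in the inequality (\ref{eq:exis}); this is the physically relevant condition, since $\mu>1$ is the weak unstable multiplier at $P_1$ and $\lambda<1$ is the weak stable multiplier at $P_2$, so $\mu^{m_1}\lambda^{m_2}$ can lie on either side of $1$ depending on the integer pair $(m_1,m_2)$.

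Under the hypothesis $\mu^{m_1}\lambda^{m_2}>1$ the denominator $\mu^{m_1}\lambda^{m_2}-1$ is strictly positive, so I can multiply through without flipping the inequality. Explicitly, (\ref{eq:exis}) is equivalent to
\[
|q_1| \le \kappa^1_c\bigl(\mu^{m_1}\lambda^{m_2}-1\bigr),
\]
which in turn is equivalent to
\[
\mu^{m_1}\lambda^{m_2} \;\ge\; \frac{|q_1|}{\kappa^1_c}+1 \;=\; \tilde{\alpha}.
\]
Consequently the strict hypothesis $\mu^{m_1}\lambda^{m_2}>\tilde\alpha$ implies (\ref{eq:exis}) with strict inequality. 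The final remark $\tilde\alpha>1$ is immediate from $|q_1|/\kappa^1_c\ge 0$, and in fact the inequality is strict because $Q_1\ne P_1$ forces $q_1\ne 0$ by the choice of local coordinates in STEP 1 of the construction of an SH-simple cycle.

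There is no genuine obstacle here; the lemma is simply a convenient repackaging of (\ref{eq:exis}) as a lower threshold on the center multiplier product $\mu^{m_1}\lambda^{m_2}$ in terms of the geometric data $|q_1|$ and $\kappa^1_c$. The reason to isolate it as a lemma is that in the sequel one wants to combine this threshold with the rational independence of $\log\mu$ and $\log\lambda$ (established in STEP 1 of the previous section) in order to realize every sufficiently large prescribed period $\sigma_1+\sigma_2+m_1+m_2$ by choosing $(m_1,m_2)$ so that $\mu^{m_1}\lambda^{m_2}$ both exceeds $\tilde\alpha$ and approaches $1$ from above along a chosen subsequence, the latter giving vanishing center Lyapunov exponents and enabling the final perturbation that produces super-exponentially many periodic points.
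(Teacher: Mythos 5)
Your proposal is correct and is exactly the ``direct calculation'' the paper alludes to without writing out: under $\mu^{m_1}\lambda^{m_2}>1$ the denominator in (\ref{eq:exis}) is positive, and clearing it gives the equivalence with $\mu^{m_1}\lambda^{m_2}\ge \tilde{\alpha}$. Your reading of the exponents as a typo for $\mu^{m_1}\lambda^{m_2}$ is also consistent with the paper's later use of the lemma.
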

Now we are ready to complete the proof.

\begin{proof}[End of the proof of Proposition~\ref{prop:ana}]
Let $C := \max\{|\log\lambda|, |\log\mu|\}$ and 
choose $L, L' > \log\tilde{\alpha}$ such that 
$L' -L> 2C$ holds. Then, we investigate the pair of integers 
$(m_1, m_2)$ satisfying 
\[
L < m_2 \log \lambda + m_1 \log \mu < L'.
\]

Now, we fix some sufficiently large $(m_{1,1}, m_{2,1})$ which 
satisfy the above inequality. Such pair of integers exist since 
$L'-L >2C $. Then we can inductively construct another
pair of integers $(m_{1,2},m_{2,2})$ which also satisfies above inequality 
and $m_{1,2}+m_{2,2} = m_{1,1}+m_{2,1} +1$ holds. 
Indeed, given $(m_{1,1}, m_{2,1})$, by the condition 
$L'-L >2C $ we can see that at least one of
$(m_{1,1}+1, m_{2,1})$ and $(m_{1,1}, m_{2,1}+1)$ satisfies 
the inequality. Let us denote that pair by $(m_{1,2}, m_{2,2})$.

Thus, by induction, we can choose the sequence of the pair of integers $\{(m_{1,l}, m_{2,l})\}_{l=1}^{+\infty}$ such that 
\begin{align}\label{plusone}
\sigma_1+m_{1,l+1}+\sigma_2+m_{2,l+1}=\sigma_1+m_{1,l}+\sigma_2+m_{2,l}+1,
\end{align}
and satisfying the inequality $$L < m_{2,l} \log \lambda + m_{1,l} \log \mu < L'$$ for every $l\in\mathbb{N}$. 
We claim that $R_l:=\phi_1^{-1}\bigl((X_l,q_1+Y_l,Z_l)\bigr)$, where $(X_l,q_1+Y_l,Z_l)$ is the solution of Proposition~\ref{solu} (see (\ref{eq:solx}, \ref{eq:soly}, \ref{eq:solz}))
corresponding to $(m_{1,l},m_{2,l})$, gives the desired sequence of the periodic points. In fact, on the one hand, we have $m_{2,l} \log \lambda + m_{1,l} \log \mu>L>\log\tilde{\alpha}$, thus by Lemma~\ref{alpha} and Proposition~\ref{solu}, there indeed exists a periodic point $R_l$ of period $\pi(R_l)=\sigma_1 +m_{2,l}+\sigma_2+ m_{1,l}$. Moreover, $\pi(R_l)$ satisfies $\pi(R_{l+1})=\pi(R_l)+1$ according to (\ref{plusone}). 
On the other hand, the central 
Lyapunov exponent of $R_l$ is given by
\[
\frac{m_{2,l} \log \lambda + m_{1,l} \log \mu}{\sigma_1+m_{2,l}+\sigma_1+m_{1,l}},
\]  
whose numerator has absolute value bounded by $L'$ from above. Thus 
as $l$ tends to infinity, $(m_{1,l}+m_{2,l})$ goes to infinity as well, which leads to the conclusion that the central Lyapunov exponent converges to zero. Moreover, by translating the subscript of $R_l$, we can make $\pi(R_l)=l$ for every sufficiently large $l\in\mathbb{N}$. 

Let us confirm that there is no self accumulation of 
the sequence of the points $\{R_l\}$. Indeed, by construction
one can check that the accumulation points 
of $\{R_l\}$ are contained in the set 
\[
\phi_1^{-1}(
\{\boldsymbol{0}^{d_s}\}
\times \mathbb{D}_{\kappa_c}^{}
\times 
\{\boldsymbol{0}^{d_u}\})
\]
and it does not contain any point of 
$\{R_l\}$. This implies the 
conclusion. 

Furthermore, by construction 
we know that every $R_j$ admits 
partially hyperbolic 
splitting with bounded angles, 
deriving from the SH-simple cycles
(indeed, the splitting is orthogonal in the 
local coordinate). 

Thus the proof is completed.
\end{proof}


\section{On the generic non-divergence}
In this section, we provide the proof of Threorem~\ref{theo:resi}
which says that Theorem~\ref{theo:main}
cannot be improved to the generic setting. 
We thank Masayuki Asaoka for pointing out the importance 
of the result of \cite{Ka}.

A sequence $(a_n)$ of positive integers is said to
\emph{grows super exponentially} if for 
every $r>1$ we have 
$\lim_{n \to \infty}  r^{n}/a_n \to 0 $  
holds.

The following result by Kaloshin \cite{Ka} is the 
main ingredient of the proof.
\begin{prop}\label{prop:kalo}
Given $1 \leq s < \infty$,
there exists a dense subset $\mathcal{D}^s \subset \mathrm{Diff}^s(M)$ such that
for every $f \in \mathcal{D}^s$ the followings hold:
\begin{itemize} 
\item Every periodic point of $f$ is hyperbolic.
\item There exists a positive real number $C_f >0$ such 
that $\#\mathrm{Per}_n(f) < \exp (C_fn) $ holds for every $n \in \mathbb{N}$.
\end{itemize}
\end{prop}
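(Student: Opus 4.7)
The plan is to follow Kaloshin's strategy, which refines the classical Artin--Mazur theorem. Given $f \in \mathrm{Diff}^s(M)$ and $\varepsilon > 0$, the first step is to approximate $f$ in the $C^s$ topology by a diffeomorphism $\tilde f$ that is polynomial of some fixed degree $d$ when read in a finite atlas on $M$. This is the Artin--Mazur construction via Weierstrass-type polynomial approximation together with a smooth gluing; the hypothesis $s < \infty$ is used precisely here.

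For such a $\tilde f$ and each $n$, the fixed points of $\tilde f^n$ are, in each chart, the real zeros of a polynomial system of degree at most $d^n$. A Bezout-type count applied chart by chart bounds the number of isolated real zeros by some $C'\,(d^n)^{\dim M}$, yielding the required exponential estimate $\#\mathrm{Per}_n(\tilde f) < \exp(C_f n)$ with, say, $C_f := (\dim M)\log d + \log C' + 1$, provided every periodic point is isolated.

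To enforce isolatedness, and moreover hyperbolicity of every periodic point, I would perform a second perturbation inside the finite-dimensional space of polynomial coefficients of degree at most $d$. For each fixed $n$, the locus on which some fixed point of $\tilde f^n$ carries an eigenvalue on the unit circle is a proper real-semialgebraic subset of this parameter space, so an arbitrarily small perturbation moves off it. Because hyperbolicity of a prescribed periodic orbit is a $C^1$-open condition, an inductive construction with perturbation size $\varepsilon / 2^{n+1}$ at stage $n$ delivers a diffeomorphism within $\varepsilon$ of $f$ whose periodic points are all hyperbolic, while the bound from the first step is preserved. The resulting dense subset $\mathcal{D}^s$ is obtained by letting $\varepsilon \to 0$.

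The main obstacle is coordinating the chartwise algebraic description with the globally defined smooth diffeomorphism, so that the Bezout count survives composition with the transition maps between charts and so that the constant $C_f$ does not drift with $n$. The remedy is to fix the degree $d$ large enough once and for all to absorb the transition data, and to confine every subsequent perturbation to the same bounded-degree polynomial class; this is what ensures that the single constant $C_f$ controls $\#\mathrm{Per}_n$ uniformly in $n$, rather than giving a separate exponential bound at each period.
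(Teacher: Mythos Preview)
The paper does not prove this proposition; it is quoted from Kaloshin~\cite{Ka} as a black box, so there is no ``paper's own proof'' to compare against.

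Your overall strategy---polynomial approximation, a Bezout-type count of fixed points of $\tilde f^n$, then a small perturbation inside the polynomial class to force hyperbolicity---is indeed the Artin--Mazur/Kaloshin scheme. However, the chart-by-chart implementation you propose has a genuine gap. Transition maps between charts of a smooth atlas are not polynomial, so a map that is polynomial of degree $d$ in each chart does not remain polynomial after composition with a transition map. The iterate $\tilde f^n$ will, for large $n$, pass through many charts, and the resulting fixed-point equation in any single chart is not a polynomial system of degree $\le d^n$; hence no estimate of the form $C'(d^n)^{\dim M}$ follows. Your proposed remedy of ``absorbing the transition data'' by enlarging $d$ cannot work: a non-algebraic transition map is not absorbed into any finite degree, and if you replace the transition maps by polynomials you are changing the manifold, not just the diffeomorphism.

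The device Artin--Mazur and Kaloshin actually use is global rather than chartwise: by the Nash--Tognoli theorem one realizes $M$ as a nonsingular real algebraic subset of some $\mathbb{R}^N$, and approximates $f$ in $C^s$ by the restriction to $M$ of a polynomial map of $\mathbb{R}^N$. The iterates are then honest polynomial maps of $\mathbb{R}^N$ of controlled degree, and a Bezout count (applied to the intersection of the graph of $\tilde f^n$ with the diagonal inside $M\times M\subset\mathbb{R}^{2N}$) gives the uniform exponential bound without any chart bookkeeping. The hyperbolicity step is then carried out by a parametric/semialgebraic argument inside this single finite-dimensional polynomial family, essentially as you outline.
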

\begin{rema}
The proof of Proposition~\ref{prop:kalo} does not work for $s =\infty$.
We do not know whether the result is true for $s =\infty$.
Nonetheless, 
we can prove that Theorem~\ref{theo:resi} is true for 
$s =\infty$.
\end{rema}

Let us complete the proof of Theorem~\ref{theo:resi}.
In the following, for every $1 \leq s \leq \infty$ 
we fix some distance function $\mathrm{dist}_{C^s}$ 
which is compatible with the $C^s$-topology. 

\begin{proof}[Proof of Theorem~\ref{theo:resi}]
First, let us consider the case $1\leq s < \infty$.
Given a positive integers $L$ and $s $, we put
\[
\mathcal{O}^{s}_L := \{ f \in \mathrm{Diff}^s(M) \mid \exists N >L, \quad \#\mathrm{Per}^h_N(f) < a_N/N  \},
\]
where $\mathrm{Per}^h_N(f)$ denotes the set of hyperbolic periodic points of $f$ of period $N$.
One can easily see that $\mathcal{O}^{s}_L$ is an open set in $\mathrm{Diff}^s(M)$
with respect to the $C^s$-topology.
Furthermore, by Kaloshin's result, one can see that $\mathcal{O}^{s}_L$ is dense in $\mathrm{Diff}^s(M)$
with respect to the $C^s$-topology.
Now, put $\mathcal{R} = \cap_{L\geq 1} \mathcal{O}^{s}_L$. This is a residual subset in 
$\mathrm{Diff}^s(M)$ and it is straightforward to see that every diffeomorphism in 
$\mathcal{R}$ satisfies the conclusion of the Theorem~\ref{theo:resi}.

\medskip

Now, let us consider the case $s =\infty$. We define the set of diffeomorphisms
$\mathcal{O}^{\infty}_L$ as in the previous case. 
The openness of $\mathcal{O}^{\infty}_L$ in 
$\mathrm{Diff}^{\infty}(M)$ is 
obvious. Let us prove the density of $\mathcal{O}^{\infty}_L$  in $\mathrm{Diff}^{\infty}(M)$.

Given $f \in \mathrm{Diff}^{\infty}(M)$ and $\varepsilon >0$, we only need to show there is a $k\in \mathcal{O}_L^\infty$ with $\mbox{dist}_{C^\infty}(f,k)<\varepsilon$. 
We choose a positive 
integer $t$ such that for $f$ and $g$ in 
$\mathrm{Diff}^{\infty}(M)$
satisfying $\mbox{dist}_{C^t}(f, g) < \varepsilon/2$ the inequality  
$\mbox{dist}_{C^{\infty}}(f, g) < \varepsilon$ holds. 
Now we choose $h \in \mathcal{D}^t$ such that $\mbox{dist}_{C^t}(f, h) < \varepsilon /5$ holds. 
Notice that $h \in \mathcal{O}^{t}_L$.
Now, by the density of $C^{\infty}$ 
diffeomorphisms in $\mathrm{Diff}^t(M)$, 
we choose $k \in \mathrm{Diff}^{\infty}(M)$ such that $k \in \mathcal{O}^{\infty}_L \subset \mathcal{O}^{t}_L $
and $\mbox{dist}_{C^{t}}(h, k) < \varepsilon/5$ hold. 
Now, we have $\mbox{dist}_{C^{t}}(f, k) < \varepsilon /2$ and hence 
$\mbox{dist}_{C^{\infty}}(f, k) < \varepsilon$. Thus have the density of $\mathcal{O}^{\infty}_L$. 

Finally, arguing in the same way as in the case of $s <\infty$, we complete the proof of Theorem~\ref{theo:resi}.
\end{proof}


\vspace{2cm}

\begin{itemize}
\item[]  \emph{Xiaolong Li \quad} (lixl@hust.edu.cn)
\begin{itemize}
\item[] Graduate School of Mathematics and Statistics, 
\item[] Huazhong University of Science and Technology, Luoyu Road 1037, Wuhan, China
\end{itemize}
\item[] \emph{Katsutoshi Shinohara \quad} (ka.shinohara@r.hit-u.ac.jp)
\begin{itemize}
\item[] Graduate School of Business 
Administration,
\item[] Hitotsubashi University, 2-1 Naka, Kunitachi, Tokyo, Japan
\end{itemize}
\end{itemize}

\end{document}